\def\@splitop#1#2\@nil{$\mathscr{#1}\!\!$\calligra#2\,\,}
\newcommand*\DeclareCursiveOperator[2]{%

  \newcommand#1{\mathop{\mbox{\@splitop#2\@nil}}\nolimits}}
\DeclareCursiveOperator{\Bnew}{B}
\DeclareCursiveOperator{\Cnew}{C}
\DeclareCursiveOperator{\Dnew}{D}
\DeclareCursiveOperator{\defe}{Def}
\theoremstyle{plain}
\newtheorem{theorem}{Theorem}[section]
\newtheorem{lemma}[theorem]{Lemma}
\newtheorem{proposition}[theorem]{Proposition}
\newtheorem{remark}[theorem]{Remark}
\theoremstyle{definition}
\numberwithin{equation}{section}
\newcommand{\sr}{\mathcal R}
\newcommand{\di}{\mathrm{dist}\ }
\newcommand{\sub}{\subset}
\newcommand{\B}{\mathcal{B}}
\newcommand{\df}{{\mathrm{def}}}
\newcommand{\hs}{{\mathcal H}}
\newcommand{\cs}{{\mathcal C}}
\newcommand{\ds}{{\mathcal D}}
\newcommand{\bs}{{\mathcal B}}
\newcommand{\E}{{\mathcal E}}
\newcommand{\W}{W}
\newcommand{\R}{{\mathbb R}}
\newcommand{\N}{{\mathbb N}}
\newcommand{\Z}{{\mathbb Z}}
\newcommand{\Om}{\Omega}
\newcommand{\ud}{\,\mathrm{d} }
\newcommand{\flap}{{\mathrm{flat}}}
\newcommand{\weakstar}{\stackrel{\star}{\rightharpoonup}}
\newcommand{\flacon}{\stackrel{\mathrm{flat}}{\to}}
\newcommand{\e}{\varepsilon}
\newcommand{\ep}{\varepsilon}
\newcommand{\en}{{\varepsilon_n}}
\newcommand{\fla}{||\cdot||_{\mathrm{flat}}}
\newcommand{\supp}{\mathrm{supp}\,}
\newcommand{\f}{\varphi}
\newcommand{\curl}{\text{curl }}
\newcommand{\rad}{\mathcal Rad}
\newcommand{\newatop}{\genfrac{}{}{0pt}{1}}
\newcommand{\Huno}{\mathcal{H}^1}
\newcommand{\AS}{\mathcal{AS}}
\newcommand{\osc}{\mathrm{osc}}
\newcommand{\res}{\mathop{\hbox{\vrule height 7pt width .5pt depth 0pt
\vrule height .5pt width 6pt depth 0pt}}\nolimits}
\title
[Low energy configurations of topological singularities]{Low energy configurations of topological singularities in two dimensions: A $\Gamma$-convergence analysis of dipoles} 
\author[L. De Luca]
{Lucia De Luca}
\address[L. De Luca]{SISSA, Via Bonomea 265, 34136 Trieste , Italy}
\email[L. De Luca]{ldeluca@sissa.it}
\author[M. Ponsiglione]
{Marcello Ponsiglione}
\address[Marcello Ponsiglione]{Dipartimento di Matematica ``G. Castelnuovo",  Sapienza Universit\`a di Roma,
Piazzale A. Moro 2, 00185 Roma, Italy} \email[M. Ponsiglione]{ponsigli@mat.uniroma1.it}
\begin{document}
\vskip .2truecm

\begin{abstract}
\small{
This paper deals with the variational analysis of topological singularities in two dimensions. We consider two canonical zero-temperature models: the {core radius approach} and the Ginzburg-Landau energy.  
Denoting by $\ep$ the length scale parameter in such models, we focus on the $|\log\ep|$ energy regime.
It is well known that, for configurations whose energy is bounded by $c |\log \e|$, 
the vorticity measures can be decoupled into the sum of  a finite number of Dirac masses, each one of them carrying $\pi |\log \e|$ energy, 
plus a measure supported on small zero-average sets.
Loosely speaking, on such sets the vorticity measure is close, with respect to  the flat norm, to  zero-average clusters of positive and negative masses. 

Here we perform a compactness and $\Gamma$-convergence analysis  accounting  also for the presence  of such  clusters of dipoles (on the range scale $\ep^s$, for $0<s<1$), which vanish in the flat convergence and whose energy contribution has, so far, been neglected. Our results refine and contain as a particular case the classical 
$\Gamma$-convergence analysis for vortices,  extending it also to low energy configurations consisting of just clusters of dipoles, and  whose energy is of order $c |\log \e|$ with $c<\pi$.

\vskip .3truecm \noindent Keywords: Ginzburg-Landau Model, Topological Singularities, Calculus of Variations.
\vskip.1truecm \noindent 2000 Mathematics Subject Classification: 35Q56, 58K45, 49J45, 35J20.
}
\end{abstract}
\maketitle


\section*{Introduction}
Beyond its relevant applications in  Physics and Materials Science, the analysis of topological singularities, as the length-scale parameter $\e$ tends to zero, is a very fascinating problem in mathematical analysis. 
A celebrated model for the study of topological singularities is the so-called Ginzburg-Landau functional. We deal with its very basic version, i.e., without magnetic field. Let $\Omega\subset\R^2$ be open, bounded, with smooth boundary. 
For any $\e>0$, the 
Ginzburg-Landau functionals $GL_\e:H^1(\Om;\R^2)\to [0,+\infty]$
are defined as
\begin{equation}\label{GL}
GL_\e(u):= \int_{\Om} \frac{1}{2} |\nabla u|^2 +
\frac{1}{\e^2} \W(|u|) \ud x,
\end{equation}
where $W\in C^0([0,+\infty))$ is such that
$W(t)\geq 0$, $W^{-1}\{0\}=\{1\}$ and
$$
\liminf\limits_{t\to 1} \frac{W(t)}{(1-t)^2}>0,\quad\liminf\limits_{t\to \infty} W(t) >0\,.
$$
In the monography \cite{BBH}, Bethuel, Brezis and H\'elein collect the main results about the asymptotic behaviour (as $\ep\to 0$) of the minimizers of $GL_\ep$ with a prescribed boundary datum $g:\partial \Omega\to \mathcal{S}^1$ having non-zero degree. Since then, much work has been devoted to understand the behaviour of sequences of functions $\{u_\ep\}$ which are not necessarily minimizers but satisfy prescribed energy bounds; the natural language to face this problem is provided by the notion of $\Gamma$-convergence.  

The starting point of such analysis has been the study of the regime $|\log\ep|$, corresponding to a  finite number of singularities  in the limit.
 Sharp lower bounds for the energy $\frac{GL_\ep}{|\log\ep|}$ are given in \cite{S,jerrard}. In \cite{JS} a $\Gamma$-convergence result in $W^{1,1}(\Omega;\R^2)$ is provided together with  a compactness analysis of 
 the vorticity measures, identified with the Jacobians $Ju_\ep$ of $u_\ep$. Specifically, up to  a subsequence, 
 the Jacobians
 $Ju_\ep$ converge  in the dual norm of H\"older continuous functions to a measure consisting of a finite sum of Dirac masses, representing the limit vortices. Self-contained short proofs of the compactness of the Jacobians in the flat norm and of the $\Gamma$-
convergence result are collected  in \cite{AP}.   
The ``first-order'' $\Gamma$-convergence of the functional $GL_\ep-M\pi|\log\ep|$ (where $M$ is the number of the singularities) has been largely studied (see, for instance, \cite{San_ser, SS0,AP}). The $\Gamma$-limit is  the  so-called {\it renormalized energy}, depending on the position of the limit singularities and governing their dynamics \cite{SS2, Se, FX, CJ}.
We end up with this list by recalling that  in \cite{ABO} the $\Gamma$-convergence analysis of $GL_\ep$ is developed in any dimension and codimension.

Another natural and perhaps simpler model for  topological singularities, particularly popular in Materials Science,  is the {\it core radius approach}.  Here the order parameter takes  values in $\mathcal{S}^1$ and has a finite number of singularities. The core radius approach consists in drilling the domain, by removing disks of radius $\e$ around each singularity, cutting in this way the logarithmic tail of the energy.    
In the specific model we deal with, we consider convenient to enforce that the singularities have minimal mutual distance of order $\e$:  The set of admissible configurations of topological singularities is    defined as
$$
 X_\ep(\Om):=\{\mu=\sum_{i=1}^{N} z_i \delta_{x_i}\,:\, N\in\N,\,z_i\in \Z,\,x_i\in\Omega,\, \di(x_i,\partial\Om) \ge 2\e, \, |x_i-x_j|\ge 4\e \quad \forall i\neq j \}\,.
$$
The energy functional $\E_{\ep}:X_\e(\Om) \to\R$ induced by the distribution of singularities $\mu$ is given by
\begin{equation}\label{cra}
\E_{\ep}(\mu):= \frac{1}{2}\min_{v\in\AS_{\ep}(\mu)} \int_{\Om_\e(\mu)} |\nabla v|^2  \ud x\,,
\end{equation}
where
$$
\Om_{\ep}(\mu):=\Om\setminus\bigcup_{x_i\in\supp \mu}B_{\ep}(x_i)
$$
and the class of admissible order parameters  associated to $\mu$ is given by
$$
\AS_{\ep}(\mu):=\displaystyle \Big\{ v \in H^1(\Om_\ep(\mu);\mathcal S^1):\,\deg(v,\partial B_\e(x_i)) = \mu(x_i) \, \text{ for all } x_i\in \supp\mu \Big\}.
$$
The $\Gamma$-convergence analysis for the functionals $\E_\ep$ gives the same outcome of the one developed for $GL_\ep$; in particular, a sequence of measures $\{\mu_\ep\}$ with $\sup_{\ep}\frac{\E_\ep(\mu_\ep)}{|\log\ep|}<+\infty$ converges, up to a subsequence, to a finite sum  $\mu$ of Dirac deltas (see \cite{P}). 

In the results mentioned so far,  the $\Gamma$-convergence analysis is done  with respect to the flat norm of the vorticity measures,  neglecting somehow the contribution of  short (in terms of $\ep$) clusters of dipoles of vortices.
In this respect a natural question is how to describe these clusters, quantifying their energy contribution. A first result in this direction has been proven in \cite{JSp} for the Ginzburg-Landau functional.  In such a paper, the authors provide some fine estimates on the flat distance between  $Ju_\ep$ and the class of measures which are sum of Dirac masses with integer coefficients.  In view of this result  we can look at $Ju_\ep$ as a superposition
of zero-average clusters and isolated vortices. 

In this paper, we analyze the behaviour and the energy contribution of zero-average  clusters  whose size can be expressed in terms of $\ep^s$, with $0<s<1$. To this end, we first consider the measures $\mu_\ep$ (resp. $Ju_\ep$) and their convolution $\mu_\ep^s$ (resp. $J^s u_\ep$) with a mollifier whose support is of order $\ep^s$.
It turns out that these measures average out all clusters of dipoles whose size is smaller than $\e^s$, while they provide a good description  for the effective vorticity at mesoscopic scales of order $\e^s$.  
We show that, in the $|\log\ep|$ regime, $\mu^s_\ep$ (resp. $J^s u_\ep$) weak star converge, up to a subsequence, to the flat limit $\mu$ of $\mu_\ep$ (resp. $Ju_\ep$). In this respect we obtain a new compactness property, namely in the  weak star topology,  for the vorticity measures.

In order to account for the short dipoles, we consider the measures $|\mu_\ep^s|$ (resp. $|J^s u_\ep|$), and we prove that they  also enjoy suitable weak star compactness properties. 
The advantage of considering these  total variation measures is that their limits keep track of all clusters of vortices of size larger than $\e^s$.  In particular, the limit family of measures parametrized by $s$  classifies them according to their length.  
 Specifically, we prove that, up to a subsequence independent of $s$, the measures $|\mu_\ep^s|$ (resp. $|J^s u_\ep|$)  converge to some  limit measure $\nu^s$, with $\nu^s=|\mu|+2\xi^s_\df$, being $\xi^s_\df$ a positive sum of  Dirac deltas. Moreover, $\nu^s$ is piecewise constant and non-decreasing with respect to $s$ and  it has a countable set $S=\{0=s_0<s_1<\ldots\}$ of jumps. The measures $\xi^s_\df$  describe the limit density of zero-average clusters of vortices at all the scales  parametrized in terms of all powers $s\in(0,1)$ of $\e$.   
Finally, we prove that the $\Gamma$-limit of the energy $\frac{\E_\ep}{|\log\ep|}$ (resp. $\frac{GL_\ep}{|\log\ep|}$) with respect to the convergence of $\mu_\ep$, $\mu_\ep^s$ and $|\mu_\ep^s|$ (resp. $Ju_\ep$, $J^s u_\ep$ and $|J^su_\ep|$) is given by
$$
\pi\sum_{k=1}^{\sharp S}(s_k-s_{k-1})\nu^{s_{k-1}}(\Om)\,.
$$ 
In particular, this energy is minimized for $\xi^s_\df\equiv 0$, corresponding to $\nu^s\equiv |\mu|$, and in this case it gives back the classical $\Gamma$-limit $\pi |\mu|(\Om)$. 
Moreover, our $\Gamma$-convergence analysis provides a new non-trivial outcome whenever the energy bound is  lower than $\pi |\log \e|$, that is, lower than the minimal energy of a single isolated vortex. More precisely, for configurations with energy of order $c |\log \e|$, with $c<\pi$,  the macroscopic limit  vorticity is zero, while $\xi_\df^s$, and in turn $\nu^s$ can be different than zero for all $0<s<1$ large enough (depending on the prefactor $c$).

To conclude, let us mention that clusters of vortices with zero-average are relevant in many physical systems. They first appear (and then remain together with isolated vortices) as the temperature, and more in general the free energy, increases. In the context of screw dislocations in crystals, they are somehow identified with the so-called statistically stored dislocations. 
Our analysis is a first attempt to describe these objects quantifying their energy contribution, within a purely variational approach at zero temperature, in the rigorous framework of $\Gamma$-convergence.    
We believe that the analysis we have developed here for $GL_\ep$ and $\E_\ep$ can be extended with minor variations to the case of discrete vortices in the $XY$ model and screw dislocations in crystal plasticity \cite{ADGP,AC,P}, whereas an extension to  semi-discrete models for edge dislocations \cite{DGP} appears less clear, and in our opinion deserves future investigations.

\section{Notations and preliminary results}
In this section we introduce the notations that we will use throughout the paper.
We start by fixing    an open bounded subset $\Omega$ of $\R^2$ with Lipschitz continuous boundary.
\subsection{Weak star and flat convergence}
Let $C_c(\Omega)$ be the space of continuous functions compactly supported in $\Omega$
endowed with the $L^\infty$ norm.
A sequence $\{\mu_n\}$ of measures {\it weak star converges} in $\Omega$ to a measure $\mu$ if  for any $\f\in C_c(\Omega)$
$$
\langle\mu_n,\f\rangle\to\langle\mu,\f\rangle\qquad\textrm{as }n\to+\infty\,. 
$$ 
In the following, wherever it is not specified, $\weakstar$ will denote the weak star convergence in $\Omega$.
Moreover, let
 $C^{0,1}(\Om)$ be the space of Lipschitz
continuous functions on $\Om$  endowed with the norm 
$$
\|\psi\|_{C^{0,1}} := \sup_{x\in\Om} |\psi(x)|+ \sup_{\newatop{x,y\in\Om}{  x\neq y}} \frac{|\psi(x) - \psi(y)|}{|x-y|},
$$
and let $C^{0,1}_c(\Om)$ be its subspace of functions  with compact support. 
The   norm in the dual of $C^{0,1}_c(\Om)$ will be denoted by $\fla$ and referred to as {\it{flat norm}},  while $\flacon$ denotes the convergence with respect to the flat norm.
\subsection{Jacobian, current and degree} 
Given $u\in H^1(\Om;\R^2)$, the Jacobian $J u$ of $u$ is the $L^1$
function defined by
$$
J u:= \text{det} \nabla u.
$$
For every $u\in H^1 (\Om;\R^2)$, we can consider $J u$ as an
element of the dual of $C^{0,1}(\Om)$ by setting
$$
\langle J u , \psi\rangle := \int_{\Om} J u  \, \psi \, \ud x \qquad \text{ for any }
\psi\in C^{0,1}({\Om}).
$$
Notice that $Ju$ can be written in a
divergence form as
$J u= \text{div } (u_1 (u_2)_{x_2}, - u_1 (u_2)_{x_1})$,
i.e., for any $\psi \in C^{0,1}_c(\Om)$,
\begin{equation}\label{jwds}
\langle J u, \psi\rangle= - \int_{\Om} u_1 (u_2)_{x_2} \psi_{x_1} - u_1
(u_2)_{x_1} \psi_{x_2} \ud x.
\end{equation}
Equivalently,   we have $J u = \curl (u_1 \nabla u_2)$ and $J u = \frac{1}{2} \curl j(u)$, 
where 
$$
j(u):= u_1 \nabla u_2 - u_2 \nabla u_1
$$ 
is the so-called {\it current}. 

Let $A\subset \Om$ be open with Lipschitz boundary, and let $h\in H^{\frac12}(\partial A; \R^2)$ with $|h|\ge \alpha >0$. The {\it degree} of $h$ is defined as follows
\begin{equation*}
\deg(h,\partial A):= \frac{1}{2\pi} \int_{\partial A} j(h/|h|) \cdot \tau\ud\Huno,
\end{equation*}
where $\tau$ is the tangent field to $\partial A$. 
In \cite{Bo,BN} it is proven that the definition above is well-posed, 
it is  stable with respect to the strong convergence in $H^\frac12(\partial A;\R^2\setminus B_\alpha)$ and 
that  $\deg(h,\partial A)\in\Z$ (here $B_\alpha= B_\alpha(0)$ stands for the ball of radius $\alpha$ centered at zero).
Moreover, if $u\in H^1(A;\R^2\setminus B_\alpha)$ for some $\alpha>0$, then $\deg(u,\partial A) = 0$ 
(here and in the following  we identify $u$ with its trace). 
Finally, if $|u|=1$ on $\partial A$, by Stokes theorem (and by approximating $u$ with smooth functions) we deduce 
\begin{equation}\label{defcur}
\int_A Ju \ud x= \frac{1}{2} \int_A \curl j(u)\ud x := \frac{1}{2} \int_{\partial A} j(u) \cdot \tau \ud\Huno= \deg(u,\partial A).
\end{equation}

Notice that any $u\in H^1(A;\R^2\setminus B_\alpha)$ can be written in polar coordinates as $u(x) = \rho(x) e^{i \theta(x)}$ on $\partial A$ with $|\rho|\ge\alpha$, where $\theta$ is the so called {\it lifting }of $u$. 
By \cite[Theorem 1]{BB} (see also \cite[Remark 3]{BB}), if $A$ is simply connected and $\deg(u,\partial A)=0$, then the lifting   can be  selected in $H^{\frac12} (\partial A)$ with the map $u\mapsto \theta$ continuous (but the image of a bounded subset of  $H^{\frac12} (\partial A;S^1)$ is not necessarily  bounded in $H^{\frac12} (\partial A)$).  
If the degree $d$ is not zero, then  the lifting 
  can be locally selected in $H^{\frac12} (\partial A)$ with a ``jump'' of order $2\pi d$. 
\par
Let us introduce a  notion of modified Jacobian (a variant of the notion introduced  in \cite{ABO}),  which  we will use in our $\Gamma$-convergence results. 
Given $0<\tau<1$ and $u\in H^1(\Om;\R^2)$, set
\begin{equation}\label{moja}
u_{\tau}:= T_\tau(|u|) \frac{u}{|u|}, \qquad J_\tau u: = J u_\tau, \qquad \mbox{ where } T_\tau(\rho)= \min\left\{\frac{\rho}{\tau}, 1\right\}.
\end{equation}

Notice that, for every $v:=(v_1, v_2), \,
w:=(w_1, w_2)$ belonging  to $H^{1}(\Om;\R^2)$
 we have
\begin{equation}\label{for}
J v - J w = \frac{1}{2} \big(J (v_1 - w_1, v_2 + w_2) - J
(v_2-w_2,v_1+w_1)\big).
\end{equation}
By \eqref{jwds} and \eqref{for} we immediately deduce the
following lemma.
\begin{lemma}\label{cosu}
There exists a universal constant $C>0$ such that for any $v,w\in H^1(\Om;\R^2)$, there holds
$$
\|J v - J w\|_\flap\le C\|v - w\|_{L^2} (\|\nabla v\|_2 + \|\nabla w\|_{L^2})\,.
$$ 
\end{lemma}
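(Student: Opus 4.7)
The plan is to exploit the bilinearity identity \eqref{for} together with the distributional formula \eqref{jwds} for the Jacobian in divergence form, and then read off the bound directly by Cauchy--Schwarz.

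First I would reduce the claim to an auxiliary ``bilinear'' estimate: for any $a,b\in H^1(\Om)$ and for $J(a,b):=a_{x_1}b_{x_2}-a_{x_2}b_{x_1}$, one has
$$
\|J(a,b)\|_\flap\le C_0\,\|a\|_{L^2(\Om)}\,\|\nabla b\|_{L^2(\Om)}
$$
for some universal constant $C_0>0$. Once this is established, identity \eqref{for} and the triangle inequality give
$$
\|Jv-Jw\|_\flap\le\tfrac{1}{2}\bigl[\|J(v_1-w_1,v_2+w_2)\|_\flap+\|J(v_2-w_2,v_1+w_1)\|_\flap\bigr],
$$
and the elementary bounds $\|v_i-w_i\|_{L^2}\le\|v-w\|_{L^2}$ together with $\|\nabla(v_j+w_j)\|_{L^2}\le\|\nabla v\|_{L^2}+\|\nabla w\|_{L^2}$ immediately yield the conclusion with $C=C_0$.

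To prove the auxiliary estimate I would pick $\psi\in C^{0,1}_c(\Om)$ with $\|\psi\|_{C^{0,1}}\le 1$ and apply formula \eqref{jwds} with $u_1$ replaced by $a$ and $u_2$ by $b$, which is legitimate because $a\,\nabla b\in L^1(\Om)$ by Cauchy--Schwarz (no $L^\infty$ bound on $a$ or $b$ is needed, since the test function is Lipschitz with compact support). This yields
$$
|\langle J(a,b),\psi\rangle|\le \int_\Om |a|\,|\nabla b|\,|\nabla\psi|\,dx\le \|\nabla\psi\|_{L^\infty}\cdot C_0\,\|a\|_{L^2}\|\nabla b\|_{L^2}\le C_0\,\|a\|_{L^2}\|\nabla b\|_{L^2},
$$
and taking the supremum over admissible $\psi$ gives the estimate.

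There is no real obstacle beyond bookkeeping; the content of the lemma is essentially that the identity \eqref{for} has been designed so that, in every summand, an ``$L^2$-small'' factor $v_i-w_i$ is paired with an ``energy-bounded'' gradient $\nabla(v_j+w_j)$, which is exactly what one needs in the distributional pairing \eqref{jwds} to close the estimate.
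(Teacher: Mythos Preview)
Your proof is correct and follows exactly the route indicated in the paper, which simply says the lemma is an immediate consequence of \eqref{jwds} and \eqref{for}. You have merely filled in the details of that deduction (the bilinear estimate via Cauchy--Schwarz), so there is nothing to add.
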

By Lemma \ref{cosu} we easily obtain  the following proposition.
\begin{proposition}\label{sempre}
Let $\{u_n\}$ be a sequence in $H^1(\Om;\R^2)$ such that $GL_\en(u_n) \le C|\log\en|$, and let $\delta\in(0,\frac 12)$.
Then there exists $C_\delta>0$ such that
\begin{align*}
& \sup_{ \tau\in(\delta,1-\delta)}\| J u_n - J_{\tau} u_n\|_{\flap}\le  C_\delta\en|\log\en|, \\
& \sup_{ \tau\in(\delta,1-\delta)}|J_{\tau}u_n|(\Omega)\le C_\delta |\log\en|\,.
\end{align*}
\end{proposition}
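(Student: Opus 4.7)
The plan is to deduce both inequalities from the global energy estimate $\|\nabla u_n\|_{L^2}^2\le 2\,GL_{\en}(u_n)\le C|\log\en|$ and the potential control $\int_\Om W(|u_n|)\,dx\le \en^2 GL_{\en}(u_n)\le C\en^2|\log\en|$, using Lemma~\ref{cosu} and the pointwise inequality $|\det A|\le\tfrac12|A|^2$. The preliminary step is to observe that for $\tau\in(\delta,1-\delta)$, differentiating $u_\tau=T_\tau(|u|)u/|u|$ yields $|\nabla u_\tau|\le(2/\delta)|\nabla u|$ almost everywhere: on $\{|u|\ge\tau\}$ one has $u_\tau=u/|u|$ with $|u|\ge\delta$, and on $\{|u|<\tau\}$ one has $u_\tau=u/\tau$. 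Hence $\|\nabla(u_n)_\tau\|_{L^2}\le C_\delta|\log\en|^{1/2}$, uniformly in $\tau$.

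The second bound is then immediate from $|\det A|\le\tfrac12|A|^2$:
\[
|J_\tau u_n|(\Om) \le \int_\Om |\det\nabla (u_n)_\tau|\,dx \le \tfrac12\int_\Om |\nabla (u_n)_\tau|^2\,dx \le C_\delta|\log\en|.
\]
For the first bound, Lemma~\ref{cosu} reduces the problem to proving $\|u_n - (u_n)_\tau\|_{L^2}\le C_\delta\en|\log\en|^{1/2}$, since the two gradient factors are both of order $|\log\en|^{1/2}$. A direct computation gives $u-u_\tau=(|u|-T_\tau(|u|))u/|u|$, so that $|u-u_\tau|=\bigl||u|-1\bigr|$ on $\{|u|\ge\tau\}$ and $|u-u_\tau|=|u|(1-\tau)/\tau$ on $\{|u|<\tau\}$. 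The hypotheses on $W$ — quadratic lower bound near $t=1$, strict positivity on compact subsets of $[0,\infty)\setminus\{1\}$, and positive liminf at infinity — yield the pointwise inequality $|u-u_\tau|^2\le C_\delta W(|u|)$ on $\{|u|\le 2\}$: on $\{|u|<\tau\}\subset\{|u|\le 1-\delta\}$ one has $|u-u_\tau|^2\le C_\delta$ and $W(|u|)\ge c_\delta>0$; on $\{\tau\le|u|\le 2\}$ one has $(|u|-1)^2\le 1$ and $W(|u|)\ge c_0\min((|u|-1)^2,1)=c_0(|u|-1)^2$. Combined with the potential estimate, this handles $\{|u_n|\le 2\}$.

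The main obstacle is the ``high-amplitude'' region $\{|u_n|>2\}$, on which the weak growth hypothesis on $W$ at infinity yields only the Lebesgue measure bound $|\{|u_n|>2\}|\le C\en^2|\log\en|$, but no pointwise control of $(|u_n|-1)^2$. I would handle it by pre-truncation: fix $M>1$ large enough that $W\ge c_0>0$ on $[M,\infty)$, and set $\bar u_n := T_M(|u_n|)u_n/|u_n|$. Since $\tau<1\le M$ implies $(\bar u_n)_\tau=(u_n)_\tau$, one has $J_\tau\bar u_n=J_\tau u_n$, and the bounded function $|\bar u_n|\le M$ admits the previous pointwise estimate together with $\int W(|\bar u_n|)\le \int W(|u_n|)+W(M)|\{|u_n|>M\}|\le C\en^2|\log\en|$. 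It then remains to control the residual flat-norm error $\|Ju_n-J\bar u_n\|_\flap$, for which one exploits the divergence representation of the Jacobian and the smallness of the support $\{|u_n|>M\}$; this is the technically most delicate step, since a direct application of Lemma~\ref{cosu} would require an $L^2$-bound on $u_n-\bar u_n$ that is not available under the present hypotheses.
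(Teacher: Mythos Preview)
Your route is precisely the one the paper intends: it merely asserts that the proposition follows ``easily'' from Lemma~\ref{cosu} and supplies no details. The gradient bound $|\nabla u_\tau|\le(2/\delta)|\nabla u|$, the second inequality via $|\det A|\le\tfrac12|A|^2$, the reduction of the first inequality to $\|u_n-(u_n)_\tau\|_{L^2}\le C_\delta\,\e_n|\log\e_n|^{1/2}$, and the pointwise control $|u-u_\tau|^2\le C_\delta W(|u|)$ on $\{|u|\le 2\}$ are all correct.

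The obstacle you isolate on $\{|u_n|>2\}$ is real and is glossed over in the paper: with only $\liminf_{t\to\infty}W(t)>0$ there is no pointwise domination of $(|u|-1)^2$ by $W(|u|)$ for large $|u|$, and your pre-truncation repair meets the same wall, since on $\{|u_n|>M\}$ one has $j(u_n)-j(\bar u_n)=(|u_n|^2-M^2)\,j(u_n/|u_n|)$, which still carries a linear factor of $|u_n|$. The intended reading is almost certainly the standard stronger assumption $W(t)\ge c\min\{(t-1)^2,1\}$ (satisfied by the canonical $W(t)=(1-t^2)^2$), under which your argument closes immediately with no truncation. If one insists on the weak hypothesis, a workable substitute is the slightly weaker bound obtained by combining Poincar\'e (using $|\{|u_n|\le 2\}|\ge|\Om|/2$ to control $\|u_n\|_{L^2}$), the Sobolev embedding $H^1\hookrightarrow L^p$, and H\"older on the small set $\{|u_n|>2\}$: this yields $\|Ju_n-J_\tau u_n\|_\flap\le C_{\delta,p}\,\e_n^{1-2/p}|\log\e_n|^{3/2}$ for every finite $p$, which is weaker than the stated $\e_n|\log\e_n|$ but suffices for every application of Proposition~\ref{sempre} in the paper.
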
  

\subsection{Mollifiers} We denote by $\rho$ a mollifier in $\R^2$, i.e., a positive, $C^\infty$ and  radially symmetric scalar function  compactly supported in $B_1(0)$ with $\int_{\R^2}\rho(x)\ud x=1$. Moreover, for any $\eta>0$, we define $\rho_\eta(\cdot):=\frac{1}{\eta^2}\rho(\frac{\cdot}{\eta})$. We recall that $\rho_\eta\in C^\infty$, $\supp \rho_\eta\sub B_\eta(0)$ and $\int_{\R^2}\rho_\eta(x)\ud x=1$.  Finally, for any function $f\in L^1$, we define the mollification of $f$ as
$$
f\ast\rho_\eta(x):=\int_{\R^2}f(y)\rho_\eta(x-y)\ud y\,;
$$ 
analogously, the mollification of a Radon measure $\mu$ is defined via duality by
$$
\langle  \mu\ast\rho_\eta, \f\rangle :=
\langle  \mu, \f \ast\rho_\eta \rangle
\,,\qquad\textrm{for any }\f\in C_c.
$$ 

By the standard properties of mollifiers, for any $\f\in C_c$, we have
\begin{equation}\label{standmol}
\|\nabla(\f\ast\rho_\eta)\|_{L^\infty}\le \|\f\|_{L^\infty}\|\nabla\rho\|_{L^1} \eta^{-1}\,.
\end{equation}

\section{Ball construction}\label{bc:section}
In this section we   revisit the celebrated {\it ball construction}, a  useful machinery for providing lower bounds of the Dirichlet energy in presence of topological singularities. We follow the approach by Sandier \cite{S} (see also \cite{jerrard0, jerrard, AP}). 
\par
Let $\mathcal B=\{B_{r_1}(x_1), \ldots,B_{r_N}(x_N)\}$ be a finite family of open balls in $\R^2$ with $\bar B_{r_i}(x_i) \cap \bar B_{r_j}(x_j) = \emptyset$ for $i\neq j$, and
let  $\mu=\sum_{i=1}^N z_i\delta_{x_i}$ with $z_i\in\Z\setminus\{0\}$\,.

Let moreover $F(\B,\mu,\cdot)$ be a function defined on open subsets of $\R^2$ satisfying the following properties:
\begin{itemize} 
\item[(i)] $F(\B,\mu,A\cup B)\ge F(\B,\mu,A)+F(\B,\mu,B)$ for all $A,\, B$ open disjoint subsets of $\R^2$;
\item[(ii)] for any annulus $A_{r,R}(x)=B_{R}(x)\setminus \bar{B}_r(x)$ with $A_{r,R}(x)\cap\bigcup_{i}\bar B_{r_i}(x_i)=\emptyset$, there holds
\begin{equation}\label{lbastratto}
F(\B,\mu,A_{r,R}(x))\ge\pi|\mu(B_r(x))|\log\frac{R}{r}\,.
\end{equation}
\end{itemize}
\begin{proposition}\label{ballconstr}
There exists a one-parameter family of open balls $\B(t)$ with $t\ge 0$  such that, setting $U(t):=\bigcup_{B\in\mathcal{B}(t)} B$, the following properties hold true:
\begin{enumerate}
\item $\B(0)=\B \, $;
\item $ U(t_1)\subset U(t_2)$  for any $0\le t_1<t_2 \, $;
\item the balls in $\mathcal{B}(t)$ are pairwise disjoint;
\item for any $0\le t_1<t_2$ and for any open set $U \subseteq \R^2 \,$,
\begin{equation}\label{sti3}
F(\B,\mu,U\cap(U(t_2)\setminus U(t_1)))\ge\pi\sum_{\newatop{B\in\B(t_2)}{B\subseteq U}}|\mu(B)|\log\frac{1+t_2}{1+t_1} \, ;
\end{equation} 
\item  $\displaystyle \sum_{B\in \mathcal{B}(t)}r(B)\le(1+t)\sum_{i}r_i$, where $r(B)$ denotes the radius of the ball $B \, .$
\end{enumerate}
\end{proposition}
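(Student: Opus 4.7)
I construct $\B(t)$ by alternating two elementary operations, \emph{expansion} and \emph{merging}, starting from $\B(0):=\B$. During an expansion phase each ball $B_\rho(x)$ in the current family is replaced by $B_{\rho(1+t)/(1+t_0)}(x)$, where $t_0$ is the end of the previous merging phase; expansion continues until the first time $t^*$ at which two closed balls in the family touch. At $t^*$ I enter a merging phase, in which I repeatedly pick a pair of touching balls $B_{\rho_1}(y_1), B_{\rho_2}(y_2)$ and replace them by the ball of radius $\rho_1+\rho_2$ whose diameter is the segment joining their antipodal points; this enclosing disk contains both original ones, and its radius is exactly the sum of theirs. Iterating pairwise until no further touching occurs yields the new family, after which expansion resumes. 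Properties (1)--(3) and (5) follow immediately from the construction: (1) is the initial condition; expansion enlarges each ball and the merging disk contains the merging pair, giving (2); disjointness (3) holds because expansion halts at the first touch and touching pairs are merged at once; and since each pairwise merge preserves the total sum of radii exactly while each expansion scales it by $(1+t)/(1+t_0)$, one inductively obtains $\sum_{B\in\B(t)} r(B)=(1+t)\sum_i r_i$, which is (5).

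The substance of the proof is (4). Fix $U$ open, $0\le t_1<t_2$, and $B\in\B(t_2)$ with $B\subseteq U$. Let $t_1=:s_0<s_1<\cdots<s_m<s_{m+1}:=t_2$ enumerate the merging times in $[t_1,t_2]$ involving an ancestor of $B$. On each open interval $(s_j,s_{j+1})$ the ball $B$ is the disjoint union of sub-balls $B_{j,\ell}(t)$, $\ell=1,\dots,k_j$, each expanding independently; I set
\[
A_{j,\ell}:=B_{j,\ell}(s_{j+1})\setminus\overline{B_{j,\ell}(s_j^+)},
\]
which is an annulus with inner and outer radii in the ratio $(1+s_{j+1})/(1+s_j)$. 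These annuli are pairwise disjoint (they sit in nested generations of ancestors of $B$), contained in $B\cap(U(t_2)\setminus U(t_1))$, and disjoint from the initial balls (which lie in $U(s_j)$). Hypothesis (ii) then yields
\[
F(\B,\mu,A_{j,\ell})\ge \pi\,|\mu(B_{j,\ell}(s_j^+))|\,\log\frac{1+s_{j+1}}{1+s_j}.
\]
Since the atoms of $\mu$ lie at the initial centres and are never split across ancestors, $\sum_{\ell}\mu(B_{j,\ell}(s_j^+))=\mu(B)$, so the triangle inequality gives $\sum_\ell|\mu(B_{j,\ell}(s_j^+))|\ge|\mu(B)|$. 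Applying superadditivity (i) to sum the local bounds over $\ell$ and then over $j$ produces a telescoping estimate that collapses to
\[
F(\B,\mu,B\cap(U(t_2)\setminus U(t_1)))\ge \pi\,|\mu(B)|\,\log\frac{1+t_2}{1+t_1}.
\]
Summing this bound over all $B\in\B(t_2)$ contained in $U$, which are disjoint by (3), and invoking (i) one last time yields (4).

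The main obstacle is the geometric control of the merging step: one must resolve chains of simultaneously touching balls by pairwise merges that both preserve the sum-of-radii identity and keep the resulting enclosing disks disjoint from the other balls of the family, so that neither the monotonicity of $U(t)$ nor property (3) is destroyed; some care is also needed to check that only finitely many merges occur on any bounded time interval. Once this geometric step is in place, the energy estimate (4) is a clean accounting argument relying only on the abstract hypotheses (i) and (ii) of $F$ and the bookkeeping of ancestors across merging events.
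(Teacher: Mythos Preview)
Your construction and argument are correct and follow the standard Sandier--Jerrard scheme that the paper also adopts; the merging rule and expansion law are the same (your claim of \emph{equality} in (5) is a harmless slip---after an iterated merge the new ball may strictly contain a third, so only $\le$ is guaranteed, which is all that is needed). The one stylistic difference is in the proof of (4): you track the full genealogy of each $B\in\B(t_2)$ across all merging times in $[t_1,t_2]$, summing annulus estimates generation by generation and invoking the triangle inequality $\sum_\ell|\mu(B_{j,\ell})|\ge|\mu(B)|$ at each step, whereas the paper first proves the global monotonicity $\sum_{B\in\B(\tau_1),\,B\subset U}|\mu(B)|\ge\sum_{B\in\B(\tau_2),\,B\subset U}|\mu(B)|$ and uses it to reduce to intervals $(t_1,t_2)$ containing no merging time, where every $B\in\B(\tau)$ has a single ancestor in $\B(t_1)$ and the estimate is immediate. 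Both arguments rely on exactly the same ingredients (hypotheses (i)--(ii), the expansion ratio, and subadditivity of $|\cdot|$); the paper's reduction is a bit slicker, while your direct bookkeeping makes the role of the ancestor structure more explicit.
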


\begin{proof}
In order to construct the family $\mathcal{B}(t)$,  we closely follow the  strategy of the ball construction due to Sandier and Jerrard.
The ball construction consists in letting the balls alternatively expand and merge into each other as follows. The expansion phase  consists in letting the balls expand, without changing their centers, in such a way that, at each (artificial) time $t$ the radius $r_i(t)$ of the ball centered at $x_i$ satisfies
\begin{equation}\label{timevel0}
\frac{r_i(t)}{r_i} = 1+t \qquad \text{ for all } i.
\end{equation}

The first expansion phase stops at the first time $T_1$ when two balls bump into each other. Then the merging phase begins. It consists in identifying a suitable partition $\{S^1_j\}_{j=1,\ldots, N_n}$ of the family $\left\{B_{r_i(T_1)}(x_i)\right\}$, and, for each subclass $S^1_j$, in  finding a ball $B_{r^1_j}(x^1_j)$ which contains all the balls in $S^1_j$ such that the following properties hold: 
\begin{itemize} 
\item[i)] $B_{r^1_j}(x^1_j)\cap B_{r^1_l}(x^1_l)=\emptyset$ for all $j\neq l$;
\item[ii)] $r^1_j\ \le \sum_{B\in S^1_j} r(B)$.
\end{itemize} 

After the merging, another expansion phase begins:  we let the balls $\left\{B_{r^1_j}(x^1_j)\right\}$  expand in such a way that, for $t\geq T_1$, for every $j$ we have 
\begin{equation}\label{timevel}
\frac{r^1_j(t)}{r^1_j}=\frac{1+t}{1+T_1}\,.
\end{equation}
Again note that $r^1_j(T_1)=r^1_j$. We iterate this procedure thus obtaining a set of merging times $\left\{T_1,\ldots,T_K\right\}$ with $K\le N$ and  a family $\mathcal B(t)$ for all $t\ge 0$; precisely,  $\mathcal B(t)$ is given by $\{B_{r_j(t)}(x_j)\}_j$ for $t\in [0,T_1)$;
for  $t\in[T_k,T_{k+1})$, $\mathcal B(t)$ can be written as $\{B_{r^k_j(t)}(x_j^k)\}_j$ for all $k=1,\ldots, K-1$, while it consists of a single expanding ball for $t\ge T_k$ . 
By construction, we clearly have properties (1), (2) and (3). Moreover, (5) is an easy consequence of \eqref{timevel0}, \eqref{timevel} and property ii).

It remains to show property (4). We preliminarily note that, by (2),
\begin{equation}\label{monovar}
\sum_{\newatop{B\in \B(\tau_1)}{B\subseteq U}}|\mu(B)|\ge \sum_{\newatop{B\in \B(\tau_2)}{ B\subseteq U }}|\mu(B)|\,\qquad\textrm{for any }0<\tau_1<\tau_2.
\end{equation}
 Let $t_1<\bar t<t_2$. In view of \eqref{monovar}, if we show that (4) holds true for  the pairs $(t_1,\bar t)$ and $(\bar t,t_2)$, then (4) follows also for $t_1$ and $t_2$. Therefore,  we can assume without loss of generality that $T_{k}\notin ]t_1,t_2[$  for any $k=1,\ldots,K$.

Let $t_1<\tau<t_2$ and let $B\in \B(\tau)$. Then there exists a unique ball $B'\in\B(t_1)$ such that $B'\sub B$. By construction, $\mu(B)=\mu(B')$ and by  \eqref{lbastratto} we have 
\begin{equation*}
F(\B,\mu,B\setminus B')\ge \pi|\mu(B)|\log\frac{1+\tau}{1+t_1},
\end{equation*}
which, summing up over all $B\in \B(\tau)$ with $B\subseteq U$, and using \eqref{monovar}, yields
\begin{eqnarray*}
F(\B,\mu, U\cap(U(t_2)\setminus U(t_1)))\ge \pi\sum_{\newatop{B\in \B(\tau)}{ B\subseteq U}}|\mu(B)|\log\frac{1+\tau}{1+t_1}\ge  \pi\sum_{\newatop{B\in \B(t_2)}{ B\subseteq U}}|\mu(B)|\log\frac{1+\tau}{1+t_1} .
\end{eqnarray*}
Property (4) follows by letting $\tau\to t_2$. 
\end{proof}

The following lemma collects some convergence results that will be used in the proofs of our main results.
For any given  $\psi : \, E\subset\R^2 \to\R$ we set
$\osc_{E}(\psi):=\sup_E \psi-\inf_E\psi$. Moreover, for any family $\Bnew$ of balls  we define $\rad (\Bnew):=\sum_{B\in\Bnew}r(B)$. Finally, we often denote by $x_B$ the center of a ball $B$\,.

\begin{lemma}\label{speriamo}
There exists a constant $C>0$ such that the following holds true.
Let $\Bnew$ be a family of pairwise disjoint balls in $\R^2$  and let $\Cnew$ be the family of balls in $\Bnew$ which are contained in $\Omega$.
Let moreover $\alpha, \, \beta$ be two Radon measures supported in $\Omega$ with 
\begin{equation}\label{asslemma}
\supp\alpha\subset\bigcup_{B\in\Cnew}B,\qquad \supp\beta\subset\bigcup_{B\in\Bnew}B\qquad\textrm{and}\quad\alpha(B)=\beta(B)\quad \textrm{for any }B\in\Cnew\,.
\end{equation}
Then, for any $\eta>0$, there holds:
\begin{itemize}
\item[(i)] $\|\alpha-\beta\|_\flap\le C\,\rad (\Bnew)(|\alpha|+|\beta|)(\Omega)$\,;
\item[(ii)] $\sum_{B\in\Cnew}  |((\alpha-\beta)\res B)\ast\rho_{\eta}| \le C   \|\nabla\rho\|_{L^1} \, \eta^{-1}\rad (\Bnew)(|\alpha|+|\beta|)(\Omega) $\,;
\item[(iii)] for any $\f\in C_c(\Omega)$ 
$$
\sum_{B\in\Cnew}|\langle |\alpha(B)|\delta_{x_B}- |\alpha(B)|\delta_{x_B}\ast\rho_{\eta},\f\rangle|\le C\,|\alpha|(\Omega)\,\omega_{\f}(\eta),
$$
where $\omega_\f$ denotes the modulus of continuity of $\f$.
\end{itemize}
\end{lemma}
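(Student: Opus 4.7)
All three estimates are routine once one exploits the cancellation $\alpha(B)=\beta(B)$ on each $B\in\Cnew$, and all three reduce in the last step to
\begin{equation*}
\sum_{B\in\Cnew} r(B)\,\gamma(B)\;\le\;\max_{B\in\Bnew} r(B)\cdot\gamma(\Omega)\;\le\;\rad(\Bnew)\,\gamma(\Omega),
\end{equation*}
for a nonnegative measure $\gamma$ carried by $\bigcup_{\Bnew}B$ (used with $\gamma=|\alpha|+|\beta|$ in (i)--(ii) and $\gamma=|\alpha|$ in (iii)), relying only on the disjointness of $\Bnew$.

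For (i) I dualize the flat norm. Picking $\psi\in C^{0,1}_c(\Omega)$ with $\|\psi\|_{C^{0,1}}\le 1$ (extended by $0$ to $\R^2$), I split $\langle\alpha-\beta,\psi\rangle$ into contributions from $\Cnew$ and from $\Bnew\setminus\Cnew$. On $B\in\Cnew$ the equality $\alpha(B)=\beta(B)$ allows me to replace $\psi$ by $\psi-\psi(x_B)$, so Lipschitzness yields the per-ball bound $r(B)(|\alpha|+|\beta|)(B)$. On $B\in\Bnew\setminus\Cnew$ only $\beta$ contributes (since $\supp\alpha\subset\bigcup_{\Cnew}B$), and the ball contains a point $y\in\R^2\setminus\Omega$ at which the extended $\psi$ vanishes; hence $|\psi|\le 2r(B)$ on $B$ and the term is $\le 2r(B)|\beta|(B)$. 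For (ii) I use that $(\alpha-\beta)\res B$ has zero mass on each $B\in\Cnew$ to write
\begin{equation*}
\bigl(((\alpha-\beta)\res B)\ast\rho_\eta\bigr)(x)=\int_B\bigl[\rho_\eta(x-y)-\rho_\eta(x-x_B)\bigr]\,d(\alpha-\beta)(y),
\end{equation*}
then apply Fubini together with the translation estimate $\int_{\R^2}|\rho_\eta(z)-\rho_\eta(z-h)|\,dz\le|h|\eta^{-1}\|\nabla\rho\|_{L^1}$ (with $|h|=|y-x_B|\le r(B)$) to obtain the per-ball $L^1$ bound $r(B)\eta^{-1}\|\nabla\rho\|_{L^1}(|\alpha|+|\beta|)(B)$. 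For (iii) the pairing reduces directly to $|\alpha(B)|\int[\f(x_B)-\f(x_B-y)]\rho_\eta(y)\,dy$, which, since $\supp\rho_\eta\subset B_\eta(0)$, is bounded by $|\alpha(B)|\,\omega_\f(\eta)$; summation and $\sum_{\Cnew}|\alpha(B)|\le|\alpha|(\Omega)$ close the argument.

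The only slightly delicate step is the treatment of balls in $\Bnew\setminus\Cnew$ in part (i): the cancellation hypothesis does not apply to them, but the compact support of $\psi$ inside $\Omega$ forces $\psi$ to vanish somewhere inside each such ball, supplying the missing Lipschitz-type factor $r(B)$.
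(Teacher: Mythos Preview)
Your argument is correct and follows essentially the same route as the paper: the splitting into $\Cnew$ and $\Bnew\setminus\Cnew$ in (i), the zero-mass cancellation on each $B\in\Cnew$, the use of compact support of $\psi$ to produce a vanishing point in every $B\in\Bnew\setminus\Cnew$, and the treatment of (iii) are all identical. The only cosmetic difference is in (ii): the paper dualizes, moving the mollifier onto the test function and bounding $\osc_B(\f\ast\rho_\eta)\le\|\nabla(\f\ast\rho_\eta)\|_{L^\infty}\,\mathrm{diam}(B)\le\|\f\|_{L^\infty}\|\nabla\rho\|_{L^1}\eta^{-1}\,\mathrm{diam}(B)$, whereas you work directly with the kernel via the $L^1$ translation estimate; the two computations are dual to one another and yield the same per-ball bound.
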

\begin{proof}
We divide the proof in three steps corresponding to the three facts stated in the Lemma. 
\quad\\
{\it Step 1: Proof of (i).} Set $\Dnew:=\Bnew\setminus\Cnew$. Let $\psi\in C_{c}^{0,1}(\Omega)$ with $\|\psi\|_{C^{0,1}}\le 1$. By  \eqref{asslemma} we have
\begin{multline*}
\langle\alpha - \beta,\psi\rangle = \sum_{B\in\Cnew} \int_{B} \psi \ud(\alpha - \beta) - \sum_{B\in\Dnew}\int_{B} \psi\ud \beta
\\
\le\sum_{B\in \Cnew}  \osc_{B}(\psi)  \, (|\alpha| + |\beta|)(B) + \sum_{B\in\Dnew} \max_{B} |\psi|  \,|\beta|(B)
\\ 
\le\sum_{B\in  \Cnew}\mbox{diam}(B)\,(|\alpha| + |\beta|)(B)+\sum_{B\in
\Dnew}\mbox{diam}(B)\, |\beta|(B)
\\
 \le\sum_{B\in\Bnew} \mbox{diam}(B)\, (|\alpha| + |\beta|) (\Omega)=2\,\rad(\Bnew) (|\alpha| + |\beta|) (\Omega).
\end{multline*}
By taking the $\sup$ over all $\psi$ we get (i).

{\it Step 2: Proof of (ii).} Let $B\in\Cnew$ and let $\f\in C_c(B)$.
By \eqref{asslemma} and \eqref{standmol}, we have
\begin{multline*}
 \langle ((\alpha-\beta)\res B)\ast\rho_{\eta} ,\f\rangle 
 =  \langle (\alpha-\beta)\res B,\f \ast\rho_{\eta}\rangle \\
\le  \osc_{B}(\f\ast\rho_{\eta})(|\alpha|+|\beta|)(B) 
 \le \|\nabla(\f \ast\rho_{\eta})\|_{L^\infty}\, \mbox{diam}(B)   \,(|\alpha|+|\beta|)(\Omega)\\
   \le  \|\f\|_{L^\infty}\, \|\nabla\rho\|_{L^1} \, \eta^{-1}\, \mbox{diam}(B) \,(|\alpha|+|\beta|)(\Omega)\,.
\end{multline*}
By taking the $\sup$ over all $\f$ and summing over all $B\in\Cnew$, we get (ii).


{\it Step 3: Proof of (iii).} 
Let $\f\in  C_c(\Omega)$. Then
\begin{multline*}
\sum_{{B\in \Cnew}}  
|\langle  |\alpha(B)| \delta_{x_B} - |\alpha(B)| \delta_{x_B} \ast \rho_{\eta}, \f \rangle|= \sum_{{B\in \Cnew}}   |\alpha(B)|
|\langle   \delta_{x_B} , \f - \f \ast \rho_{\eta}\rangle| \\
\le\sum_{{B\in \Cnew}}   |\alpha(B)|\,\omega_\f(\eta)=|\alpha|(\Omega)\, \omega_\f(\eta)\,.
\end{multline*}
This concludes the proof of (iii) and of the lemma.

\end{proof}
We set
$$
X(\Om):=\{\mu=\sum_{i=1}^{N} z_i \delta_{x_i}\,: \, N\in\N, \, z_ i\in\Z, \, x_i\in\Om\}.
$$
Moreover, for any countable set $S\subset\R$ we denote by $\sharp S$ the cardinality of $S$. If $S$ is infinitely countable, with a little abuse of notations,  we write ``$k=1,\ldots,\sharp S$'' in place of ``$k\in\N$''.
Finally, here and throughout the whole paper, $C$ denotes a positive universal constant which may change from line to line. We write $C_a$ whenever we want to stress the dependence of $C$ on some parameter $a$.

\begin{theorem}\label{mains1}
Let $\mathcal B_n = \{B_{r_{i,n}}(x_{i,n})\}$ be  a sequence of finite families  of disjoint balls in $\R^2$ with ${\sr}_n:=\rad(\mathcal B_n)\to 0$ as $n\to +\infty$, and let $\mu_n:=\sum_{i} z_{i,n} \delta_{x_{i,n}}$, with $z_{i,n}\in\Z$. Assume that
\begin{equation}\label{enebound}
F(\mathcal B_n, \mu_n,\Om) +|\mu_n|(\Omega)\le C |\log \sr_n|,
\end{equation}
for some  constant $C>0$ independent of $n$. Set $\mu_n^s:=\mu_n\ast\rho_{\sr_n^s}$ for any $0<s<1$. 

There exist  $\mu\in X(\Om)$,
a countable (finite or infinite) set $S:=\{0=s_0<s_1<s_2<\ldots\}$ with $\sup S=1$, and 
a family   $\{\nu^s\}_{s\in [0,1)}\subset X(\Om)$  such that the following facts hold true.
\begin{itemize}
\item[(i)] {Flat and weak-$\star$ compactness for vorticity measures}: Up to a subsequence $\mu_n\flacon \mu$ and
for any $s\in (0,1)$,  $\mu_n^s \weakstar \mu$ up to a  subsequence (independent of $s$).
\item[(ii)] {Weak-$\star$ compactness for vorticity densities}: For any $s\in (0,1)\setminus S$, $|\mu_n^s|\weakstar \nu^s$  up to a  subsequence (independent of $s$).
\item[(iii)] {Structure of vorticity densities}: 
For all $s\in[0,1)$,  $\nu^s= |\mu| + 2 \xi_{\mathrm{def}}^s$ for some {\it defect measure}  $\xi^s_{\mathrm{def}}\in X(\Om)$ with $\xi^s_{\mathrm{def}}\ge 0$. 
Moreover, $\nu^s$  is constant in $[s_{k-1},s_{k})$ with
$\nu^{s_{k-1}} \le \nu^{s_{k}}$ for any $k=1,\ldots,\sharp S$.
\item[(iv)]{Lower bound}: $\displaystyle \liminf_{n\to +\infty} \frac{F(\mathcal B_n, \mu_n,\Om)}{|\log\sr_n|}\ge\pi\sum_{k=1}^{\sharp S}(s_k-s_{k-1})\nu^{s_{k-1}}(\Omega)$.
\end{itemize}
\end{theorem}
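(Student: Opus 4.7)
The proof couples the ball construction of Proposition \ref{ballconstr} with the cancellation estimates of Lemma \ref{speriamo}. For each $n$ and $s\in(0,1)$, I set $t_n^s:=\sr_n^{s-1}-1$ so that $(1+t_n^s)\sr_n=\sr_n^s$ and, by property (5) of Proposition \ref{ballconstr}, $\mathcal{B}_n^s:=\mathcal{B}_n(t_n^s)$ has $\rad(\mathcal{B}_n^s)\le\sr_n^s$. Writing $\mathcal{C}_n^s$ for the subfamily of balls contained in $\Omega$ and $\tilde\mu_n^s:=\sum_{B\in\mathcal{C}_n^s}\mu_n(B)\delta_{x_B}$, property (4) applied with $t_1=0$, $t_2=t_n^s$, $U=\Omega$ yields the a priori bound
\[
|\tilde\mu_n^s|(\Omega)\le\frac{F(\mathcal{B}_n,\mu_n,\Omega)}{\pi(1-s)|\log\sr_n|}\le\frac{C}{\pi(1-s)}.
\]

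For (i), Lemma \ref{speriamo}(i) gives $\|\mu_n-\tilde\mu_n^s\|_\flap\le C\sr_n^s|\log\sr_n|\to 0$. Since $|\tilde\mu_n^s|(\Omega)$ is bounded uniformly in $n$ for each $s$, a diagonal extraction over a countable dense set of $s$'s combined with the vanishing flat distance produces a single subsequence along which $\mu_n\flacon\mu$ and $\tilde\mu_n^s\weakstar\mu$ for every rational $s$. To pass from this to $\mu_n^s\weakstar\mu$ for an arbitrary $s\in(0,1)$ along the same subsequence, I fix $s$, pick a rational $s'\in(s,1)$, and split $\mu_n^s=\tilde\mu_n^{s'}\ast\rho_{\sr_n^s}+(\mu_n-\tilde\mu_n^{s'})\ast\rho_{\sr_n^s}$. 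Lemma \ref{speriamo}(ii) bounds the $L^1$-norm of the second summand by $C\sr_n^{s'-s}|\log\sr_n|\to 0$ (since $s'>s$), while the first weakly-$*$ converges to $\mu$ because convolution with a shrinking mollifier is harmless on TV-bounded sequences.

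The same splitting yields $|\mu_n^s|(\Omega)\le|\tilde\mu_n^{s'}|(\Omega)+o(1)\le C_{s'}$ uniformly in $n$, so (ii) follows from Banach--Alaoglu and a further diagonal argument. For (iii), let $\bar\nu^s$ denote the weak-$*$ subsequential limit of $|\tilde\mu_n^s|$; its total mass is bounded and non-decreasing in $s$ (refining $\mathcal{B}_n^s$ to $\mathcal{B}_n^{s'}$ can only split a neutral cluster into charged sub-clusters, thus increasing $\sum|\mu_n(B)|$), so its jumps form an at most countable set $S=\{0=s_0<s_1<\cdots\}$. At each accumulation point $y$ of atom positions $x_B$ of $\tilde\mu_n^s$, setting $P_n:=\sum_{z^j_n>0}z^j_n$, $N_n:=-\sum_{z^j_n<0}z^j_n$ for the contributing charges, the identity $\sum_j|z^j_n|=|P_n-N_n|+2\min(P_n,N_n)$ combined with $P_n-N_n\to\mu(\{y\})$ yields $\bar\nu^s=|\mu|+2\xi_{\mathrm{def}}^s$ with $\xi_{\mathrm{def}}^s\in X(\Omega)$ nonnegative. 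For $s\notin S$, every dipole in $\mu_n$ has characteristic spacing either $\gg\sr_n^s$ or $\ll\sr_n^s$, so dipoles contribute identically, in the limit, to $|\mu_n^s|$ and to $|\tilde\mu_n^s\ast\rho_{\sr_n^s}|$; a case analysis then gives $\nu^s=\bar\nu^s$.

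For (iv), superadditivity of $F$ (axiom (i)) combined with property (4) of Proposition \ref{ballconstr} applied on the disjoint annuli $U(t_n^{s_{k-1}})\setminus U(t_n^{s_k})$ (note $t_n^{s_k}<t_n^{s_{k-1}}$ since $s_{k-1}<s_k$) gives
\[
\frac{F(\mathcal{B}_n,\mu_n,\Omega)}{|\log\sr_n|}\ge \pi\sum_{k=1}^{\sharp S}(s_k-s_{k-1})\,|\tilde\mu_n^{s_{k-1}}|(\Omega),
\]
and the conclusion follows by taking the $\liminf$ and invoking weak-$*$ lower semicontinuity of total variation together with the identification $\nu^{s_{k-1}}=\bar\nu^{s_{k-1}}$ from (iii). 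The chief technical obstacle is precisely that identification: at the matching mollifier/ball scale $\sr_n^s$ Lemma \ref{speriamo}(ii) provides no useful estimate, so one must carry out a scale-by-scale analysis of the dipole structure, exploiting the fact that critical scales are avoided outside the exceptional set $S$.
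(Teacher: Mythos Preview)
Your overall strategy matches the paper's: run the ball construction, define the coarse-grained measures $\tilde\mu_n^s$, use Lemma \ref{speriamo} to compare $\mu_n^s$ with $\tilde\mu_n^{s'}$ for $s'>s$, and extract lower bounds from property (4). However, there are two genuine gaps.

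\textbf{The set $S$ is too small.} You define $S$ as the jump set of the \emph{limiting} map $s\mapsto\bar\nu^s$, and then assert that for $s\notin S$ ``every dipole in $\mu_n$ has characteristic spacing either $\gg\sr_n^s$ or $\ll\sr_n^s$''. This does not follow: continuity of the limit at $s$ says nothing about whether, for fixed large $n$, some cluster merges at a scale near $\sr_n^s$. The paper defines $S$ differently, as the Hausdorff limit of the sets $S_n$ of scales at which the \emph{number} of nonzero-charge balls in $\mathcal C_n(t_n^s)$ actually jumps. This is what guarantees that for $[s,\sigma]\subset(0,1)\setminus S$ and $n$ large, no merging of nonzero-charge balls occurs between $t_n^\sigma$ and $t_n^s$, so that the supports of $(\mu_n\res B)\ast\rho_{\sr_n^s}$ for $B\in\mathcal C_n(t_n^\sigma)$ with $\mu_n(B)\neq 0$ are pairwise disjoint. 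Without this disjointness the identity $|\sum_B\cdots|=\sum_B|\cdots|$ fails and your ``case analysis'' does not go through.

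\textbf{The lower bound uses the wrong endpoint.} In (iv) you write
\[
\frac{F(\mathcal B_n,\mu_n,\Omega)}{|\log\sr_n|}\ge\pi\sum_k(s_k-s_{k-1})\,|\tilde\mu_n^{s_{k-1}}|(\Omega)
\]
and then take $\liminf$. But $s_{k-1}\in S$, so you have no control on $|\tilde\mu_n^{s_{k-1}}|$ along the chosen subsequence; monotonicity gives only $|\tilde\mu_n^{s_{k-1}}|(\Omega)\le|\tilde\mu_n^{s}|(\Omega)$ for $s>s_{k-1}$, which is the wrong direction for a lower bound. Since $\nu^{s_{k-1}}$ is the \emph{right-continuous} value at the jump, $\liminf_n|\tilde\mu_n^{s_{k-1}}|(\Omega)$ may well equal the strictly smaller left limit $\nu^{s_{k-2}}(\Omega)$. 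The paper avoids this by applying property (4) on the slightly smaller interval $[t_n^{s_l-\eta},t_n^{s_{l-1}+\eta}]$, so that the outer scale $s_{l-1}+\eta$ lies in the interior of $(s_{l-1},s_l)$ where $|\tilde\mu_n^{s_{l-1}+\eta}|\weakstar\nu^{s_{l-1}}$; one then sends $\eta\to 0$ (and truncates the sum at $L\le\sharp S$ before letting $L\to\sharp S$).
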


\begin{proof}
Let $\mathcal B_n(t)$, for any $n\in \N$, be a time parametrized family of balls, starting from $\mathcal B_n$,  as in Proposition \ref{ballconstr}. 
Set 
$
\cs_n(t):= \{ B\in\bs_n(t), \, B\subset \Om\}, \quad \ds_n(t):=
\mathcal B_n(t)\setminus \cs_n(t), \quad U_n(t):=\bigcup_{B\in\bs_n(t)}B\,.
$

Moreover, for any $0<s<1$ set 
\begin{equation*}
t_n^s:=\frac{1}{\sr_n^{1-s}} - 1, \qquad
\tilde\mu_n^s:= \sum_{B_r(x)\in \cs_n(t_n^s)}  \mu_n(B_r(x)) \delta_x\,. 
\end{equation*}
By the energy bound \eqref{enebound} and by applying \eqref{sti3} with $U=\Omega$, $t_1=0$ and $t_2=t_n^s$,  we have 
\begin{multline*}
 C |\log \sr_n| \ge F(\mathcal B_n, \mu_n,\Om\cap (U_n(t_n^s)\setminus U_n(0)))\\
\ge 
\pi \sum_{B\in \cs_n(t_n^s)}|\mu_n(B)|(1-s)|\log\sr_n|=\pi (1-s)|\tilde\mu_n^s|(\Omega)|\log\sr_n|\ 
\end{multline*}
and hence 
\begin{equation}\label{tvb}
|\tilde\mu_n^s|(\Omega)\le\frac{C}{1-s}. 
\end{equation}
By \eqref{tvb}, 
 up to a  subsequence, $\tilde\mu_n^{s}$ converges to some $\mu^s$, both in the weak star and in the flat sense,  for some $\mu^s\in X(\Omega)$. Let us show that in fact $\mu:=\mu^s$ does not depend on $s$.
 To this purpose,  it is enough to prove that for any $0<\sigma_1<\sigma_2<1$, $\|\tilde\mu_n^{\sigma_1} - \tilde\mu_n^{\sigma_2}\|_{\mathrm{flat}}\to 0$
 as $ n\to +\infty$.
By construction, $(\tilde\mu_n^{\sigma_1} - \tilde\mu_n^{\sigma_2})(B) = 0$ for every $B\in\cs_n(t_n^{\sigma_1})$. 
Therefore, by applying Lemma \ref{speriamo} (i) with $\Bnew=\bs_{n}(t_n^{\sigma_1})$, $\alpha= \tilde\mu_n^{\sigma_1}$ and $\beta= \tilde\mu_n^{\sigma_2}$, we get 
\begin{equation}\label{stimaflat}
\|\tilde\mu_n^{\sigma_1} - \tilde\mu_n^{\sigma_2}\|_{\mathrm{flat}} \le C\, \rad(\bs_n(t_n^{\sigma_1})) (|\tilde\mu_n^{\sigma_1}|(\Omega)+| \tilde\mu_n^{\sigma_2}|(\Omega))\\
\le \frac{C}{1-\sigma_2} \sr_n^{\sigma_1}   \to 0 \quad \mbox{ as } n\to+\infty\,,
\end{equation}
where in the last inequality  we have used \eqref{tvb} and Proposition \ref{ballconstr} (5).  

Moreover, by applying again Lemma \ref{speriamo} (i) and Proposition \ref{ballconstr} (5), and using also the energy bound \eqref{enebound},
 we easily deduce that for every $0<s<1$ 
\begin{equation}\label{stimaflat2}
\| \mu_n - \tilde\mu_n^{s}\|_{\mathrm{flat}} \le 
 C  |\log \sr_n| \sr_n^{s}  
\quad \to 0 \qquad \mbox{ as } n\to+\infty\,.
\end{equation}

Therefore, by \eqref{tvb}, \eqref{stimaflat} and \eqref{stimaflat2}, 
\begin{equation}\label{utile}
\tilde\mu_n^s\weakstar\mu \qquad\textrm{for any }0<s<1\qquad\textrm{and}\qquad \mu_n\flacon\mu
\end{equation}
up to a subsequence independent of $s$.

We now prove the second part of (i).
In view of \eqref{tvb}, it is immediate to see that for any $0<\sigma_1,\sigma_2<1$
$$
\tilde\mu_n^{\sigma_2}\ast\rho_{\sr^{\sigma_1}_n}-\tilde\mu_n^{\sigma_2} \weakstar 0 \qquad \mbox{ as } n\to +\infty\,.
$$
Therefore, in virtue of \eqref{utile}, the claim (i) is proven if we show that for any $0<\sigma_1<\sigma_2<1$
\begin{equation*}
\mu_n^{\sigma_1}-(\tilde\mu_n^{\sigma_2}\ast \rho_{\sr_n^{\sigma_1}}) \weakstar 0  \qquad \mbox{ as } n\to +\infty\,.
\end{equation*}
Let $\f\in C_c(\Omega)$. 
By applying Lemma \ref{speriamo} (ii) with $\Bnew=\bs_n(t_n^{\sigma_2})$, $\alpha=\tilde\mu_n^{\sigma_2}$, $\beta=\mu_n$ and $\eta=\sr_n^{\sigma_1}$,  for $n$ large enough we get
\begin{multline*}
\left| \langle\mu_n^{\sigma_1}-\tilde\mu_n^{\sigma_2}\ast \rho_{\sr_n^{\sigma_1}},\f\rangle \right|
 =  \sum_{B\in\cs_n(t_n^{\sigma_2})} \left| \langle ((\mu_n-\tilde\mu_n^{\sigma_2})\res B)\ast\rho_{\sr_n^{\sigma_1}},\f\rangle \right|\\
 \le  C\|\f\|_{L^\infty}\, \|\nabla\rho\|_{L^1} \, \sr_n^{-\sigma_1}\rad (\bs_n(t_n^{\sigma_2}))(|\mu_n|+|\tilde\mu_n^{\sigma_2}|)(\Omega)\\
 \le  C\|\f\|_{L^\infty}\, \|\nabla\rho\|_{L^1}  \, \sr_n^{\sigma_2-\sigma_1}\,|\log\sr_n|\to 0\,  ,
\end{multline*}
where in the last inequality we have used again Proposition \ref{ballconstr} (5) and the bound \eqref{enebound}. 
We pass to the proof of (ii) and (iii). First we recall that, in view of  \eqref{tvb},  the measures 
$|\tilde \mu_n^s|$ are pre-compact. 
Let now $0<\sigma_1<\sigma_2<1$ and assume that for $i=1,2$
$$
|\tilde \mu^{\sigma_i}_{n_h}|\weakstar \tilde \nu^{\sigma_i} \qquad \text{ as } n_h\to+\infty,
$$
for some subsequence $n_h\to+ \infty$ (independent of $i$) and for some measures $\tilde \nu^{\sigma_i}$, that by construction satisfy 
$$
\frac 12 (\tilde \nu^{\sigma_i} - |\mu|) \in X(\Om)\,.
$$ 
Then, it is easy to see that  $\tilde \nu^{\sigma_1} \le \tilde \nu^{\sigma_2}$ in the sense of measures. Since this happens for every pairs of limit measures $(\tilde\nu^{\sigma_1},\tilde\nu^{\sigma_2})$, arguing as in the proof of the classical Helly's Theorem we deduce that there exists a non-decreasing family of measures $\tilde \nu^{s}$,
with $s\in(0,1)$, such that, up to a subsequence (not relabeled and independent of $s$)
\begin{equation}\label{priva}
|\tilde \mu^{s}_{n}|\weakstar \tilde \nu^{s} \qquad \text{ for all } 0<s<1\,.
\end{equation}
By the monotonicity property, and recalling that 
$\tilde \nu^s$ are finite sum of Dirac masses with positive integer weights, we have that 
 the map $s\mapsto \tilde \nu^s$ is piecewise constant. 

Let $\psi_n: (0,1) \to \N\cup\{0\}$ be the function that to any $s\in (0,1)$ associates the number of balls $B$ in $\mathcal C_n(t_n^s)$ with $\mu_n(B)\neq 0$, and let $S_n$ be the union of the set $\{0,\, 1\}$ with set of discontinuity points of $\psi_n$. 
By \eqref{tvb} the cardinality of $S_n\cap (0,t)$ is uniformly bounded from above by a constant depending only on $t$. Therefore, up to a subsequence, $S_n$ converge in the Hausdorff sense to some discrete set $S\subset [0,1]$  with $0, \, 1\in S$. Moreover, by construction $S$ contains all the discontinuity points of the map $s\mapsto \tilde \nu^s$. 
Let $\nu^{s}$ be the right continuous extension   to the whole interval $[0,1)$ of $\tilde\nu_s$ restricted to $(0,1)\setminus S$. 
By construction $\nu^s$ satisfies all the properties in (iii).

We pass to the proof of (ii). 
In virtue of \eqref{priva}, (ii) follows provided that for any  $s<\sigma$ with $[s,\sigma]\subset (0,1)\setminus S$ there holds
\begin{equation}\label{weakconv}
| \mu^{s}_{n}| - | \tilde \mu^{\sigma}_{n}| \weakstar 0\qquad\textrm{ as }n\to+\infty\,.
\end{equation}
Set
\begin{equation*}
\hat\mu_{n,\neq 0}^{\sigma}:=\sum_{\newatop{B\in \cs_n(t_n^{\sigma})}{\mu_n(B)\neq 0}}\mu_n\res B\quad\textrm{and}\quad \hat\mu_{n,=0}^{\sigma}:=\mu_n-\hat\mu_{n,\neq 0}^{\sigma}.
\end{equation*}
Let $A\subset\subset\Omega$ be open. By applying Lemma \ref{speriamo} (ii) with $\Bnew=\bs_n(t_n^{\sigma})$, $\alpha=0$ and $\beta= \hat\mu_{n,=0}^{\sigma}$, and by using Proposition \ref{ballconstr} (5) and the bound \eqref{enebound}, 
 for $n$ large enough we obtain
\begin{equation*}
|\hat\mu_{n,=0}^{\sigma}\ast \rho_{\sr_n^s}|(A)\le \sum_{\newatop{B\in \cs_n(t_n^{\sigma})}{\mu_n(B)= 0}}|(\mu_n\res B)\ast \rho_{\sr_n^s}|
\le C\|\nabla\rho\|_{L^1}\,|\log\sr_n|\, \sr_n^{\sigma-s}\to 0 \qquad\textrm{ as }n\to+\infty\,.
\end{equation*}
As a consequence, \eqref{weakconv} is equivalent to 
\begin{equation}\label{wcequiv}
|\hat\mu^{\sigma}_{n,\neq 0}\ast \rho_{\sr_n^s}| - | \tilde \mu^{\sigma}_{n}| \weakstar 0\qquad\textrm{ as }n\to+\infty\,.
\end{equation}
Since $[s,\sigma]\subset (0,1)\setminus S$, it is easy to see that,  for $n$ large enough,  the supports of the measures 
$(\mu_n \res B) \ast \rho_{\sr_n^s}$ (for $B \in \cs_n(t_n^{\sigma})$) are pairwise disjoint,  whence
\begin{equation}\label{follia}
|\hat\mu^{\sigma}_{n,\neq 0}\ast \rho_{\sr_n^s}| = \sum_{\newatop{B\in \cs_n(t_n^{\sigma})}{\mu_n(B)\neq 0}}|(\mu_n\res B) \ast \rho_{\sr_n^s}|.
\end{equation}
Let $\f\in C_c(\Om)$ with $\|\f\|_{L^\infty} \le 1$.  By \eqref{follia} and by triangular inequality we have 
\begin{eqnarray}\nonumber
|\langle |\hat\mu^{\sigma}_{n,\neq 0}\ast \rho_{\sr_n^s}| - | \tilde \mu^{\sigma}_{n}|,\f\rangle | \le
\sum_{\newatop{B\in \cs_n(t_n^{\sigma})}{\mu_n(B)\neq 0}} | \langle |(\mu_n\res B) \ast \rho_{\sr_n^s}| -
|\mu_n(B)| \delta_{x_B}, \f \rangle |
\\  \label{0l}
\le \sum_{\newatop{B\in \cs_n(t_n^{\sigma})}{\mu_n(B)\neq 0}} | \langle |(\mu_n\res B) \ast \rho_{\sr_n^s}|
- |\mu_n(B)| \delta_{x_B} \ast \rho_{\sr_n^s}, \f \rangle|\\
\label{1l}
+ \sum_{\newatop{B\in \cs_n(t_n^{\sigma})}{\mu_n(B)\neq 0}} | \langle |\mu_n(B)| \delta_{x_B}\ast \rho_{\sr_n^s}
- |\mu_n(B)| \delta_{x_B}, \f \rangle|\,.
\end{eqnarray}

As for the addendum in \eqref{0l}, we can apply Lemma \ref{speriamo} (ii), Proposition \ref{ballconstr} (5) and \eqref{enebound}, thus obtaining 
\begin{multline*}
\sum_{\newatop{B\in \cs_n(t_n^{\sigma})}{\mu_n(B)\neq 0}} | \langle |(\mu_n\res B) \ast \rho_{\sr_n^s}|
- |\mu_n(B)| \delta_{x_B} \ast \rho_{\sr_n^s}, \f \rangle|\le
\sum_{\newatop{B\in \cs_n(t_n^{\sigma})}{\mu_n(B)\neq 0}}  \langle |(\mu_n\res B) \ast \rho_{\sr_n^s}
- \mu_n(B) \delta_{x_B} \ast \rho_{\sr_n^s}|, |\f| \rangle \\
\le
\sum_{\newatop{B\in \cs_n(t_n^{\sigma})}{\mu_n(B)\neq 0}} |((\mu_n\res B) 
- \mu_n(B) \delta_{x_B})\ast \rho_{\sr_n^s}|
\le C\|\nabla\rho\|_{L^1}|\log\sr_n| \sr_n^{\sigma-s}\to 0\quad\textrm{as }n\to+\infty\,,
\end{multline*}
whereas for the term in \eqref{1l},  by applying Lemma \ref{speriamo} (iii) and recalling \eqref{tvb}, we get
\begin{equation*}
\sum_{\newatop{B\in \cs_n(t_n^{\sigma})}{\mu_n(B)\neq 0}}  
|\langle|\mu_n(B)| \delta_{x_B} \ast \rho_{\sr_n^s}-   |\mu_n(B)| \delta_{x_B} , \f \rangle|
\le C \omega_\f({\sr}_n^s)\to 0\qquad\textrm{ as }n\to+\infty\,.
\end{equation*}
Then, \eqref{wcequiv} follows and (ii) is proven.

Finally, we prove the lower bound (iv). Let $L\in \N$ with $L\le \sharp S$ and let $\eta>0$ (small enough). 
 
By 
\eqref{sti3} we have 
\begin{multline*}
F(\mathcal B_n, \mu_n,\Om) \ge \sum_{l=1}^L  F(\B_n,\mu_n,\Omega\cap(U_n(t_n^{s_{l-1}+\eta})\setminus U_n(t_n^{s_{l}-\eta}))\\
\ge
 \pi|\log \sr_n| \sum_{l=1}^L (s_{l} - s_{l-1} - 2 \eta) \sum_{B\in \cs_n (t_n^{s_{l-1} + \eta})} |\mu_n (B)|
=
 \pi|\log \sr_n| \sum_{l=1}^L (s_{l} - s_{l-1} - 2 \eta)  |\tilde \mu_n^{s_{l-1}+\eta}| (\Om).
\end{multline*}
By \eqref{priva} and recalling that the map $s\to  \nu^s$ is the right-continuous extension of $s\to \tilde \nu^s$, we have
$$
|\tilde \mu_n^{s_{l-1}+\eta}| \weakstar \tilde \nu^{s_{l-1}+\eta} = \nu^{s_{l-1}}.
$$
By the lower semicontinuity property of the total variation with respect to the weak star convergence we get
$$
 \liminf_{n\to +\infty} \frac{F(\mathcal B_n, \mu_n,\Om)}{|\log\sr_n|}\ge\pi\sum_{l=1}^{L}(s_l-s_{l-1} -2\eta)\nu^{s_{l-1}}(\Omega)\,,
$$

from which the lower bound (iv) follows by sending first $\eta \to 0$ and then $L\to \sharp S$.
\end{proof}

For further use, we fix the ``minimal'' functional $F$ satisfying the assumptions (i) and (ii) in Section \ref{bc:section}. First,  if $A_{r,R}(x):=B_R(x) \setminus B_r(x)$ is an annulus  that does not intersect any $B_{r_i}(x_i)$, we set 
$$
G(\mathcal B, \mu, A_{r,R}(x)) :=\pi  |\mu(B_r(x))|  \log\left(\frac{R}{r}\right).
$$ 
Then, for every open set $A\subset\R^2$ we set
\begin{equation}\label{minf}
F(\mathcal B, \mu,A):= \sup \sum_j G(\mathcal B, \mu, A_j),
\end{equation}
where the sup is over all finite families of disjoint annuli $A_j\subset A$ that do not intersect any $B_{r_i}(x_i)$.
Notice that, if $A$ is an annulus that does not intersect any $B_{r_i}(x_i)$,  then $F(\mathcal B, \mu,A)= G(\mathcal B, \mu,A)$.
\begin{remark}\label{disuene}
{\rm
    The definition of $F$ in \eqref{minf} is justified by the following observation. Let $\tilde \Om = \Om\setminus \cup_{B\in\bs} B$. Given $u\in H^1 (\tilde \Om;S^1)$, let $\mu:=\sum_{B\in\cs} \deg(u,\partial B) \delta_{x_B}$, where $\cs$ denotes the family of balls in $\bs$ that are contained in $\Om$, and  $x_B$ is the center of $B$. Then, by Jensen inequality we easily deduce (see for instance \cite{S}) that 
$$
F(\mathcal B, \mu, U)\le \frac12 \int_{U\cap\tilde\Om} |\nabla u|^2\ud x,
$$
for every open set $U\subset \Om$\,.
}
\end{remark}

\section{$\Gamma$-convergence of the core radius approach}
In this section we exploit the results in Section \ref{bc:section} in order to develop a $\Gamma$-convergence analysis for the core radius approach \eqref{cra}. 
\begin{theorem}\label{mainscra}
Let $\{\e_n\}\subset\R^+$ with $\e_n\to 0$ as $n\to +\infty$. The following $\Gamma$-convergence result holds true. 
\begin{itemize}
\item[] (\textbf{Compactness})
Let $\{\mu_n\}$ with $\mu_n \in X_{\e_n}(\Om)$ for all $n\in\N$  be such that 
\begin{equation}\label{eneboundcra}
 \E_{\ep_n}(\mu_n) \le C |\log \e_n| \qquad \text{ for all } n\in \N,
\end{equation}
for some  constant $C>0$ independent of $n$.
Set $\mu_n^s:=\mu_n\ast\rho_{\e_n^s}$ for any $0<s<1$. 

Then, there exist a measure $\mu\in X(\Omega)$,
a countable (finite or infinite) set $S:=\{0=s_0<s_1<s_2<\ldots\}$ with $\sup S=1$, and 
a family  of positive Radon measures $\{\nu^s\}_{s\in [0,1)} \subset X(\Omega)$,  such that  the following compactness and structure properties hold true:
\begin{itemize}
\item[(1)] {Flat and weak star compactness for vorticity measures}: Up to a subsequence, $\mu_n\flacon \mu$ and
for any $s\in (0,1)$,  $\mu_n^s \weakstar \mu$ up to a  subsequence (independent of $s$).
\item[(2)] {Weak star compactness for vorticity densities}: For any $s\in (0,1)\setminus S$, $|\mu_n^s|\weakstar \nu^s$ up to a  subsequence (independent of $s$).
\item[(3)] {Structure of vorticity densities}: 
For all $s\in[0,1)$,  $\nu^s= |\mu| + 2 \xi_{\mathrm{def}}^s$ for some {\it defect measure}  $\xi^s_{\mathrm{def}}\in X(\Om)$ with $\xi^s_{\mathrm{def}}\ge 0$. 
Moreover, $\nu^s$  is constant in $[s_{k-1},s_{k})$ and
$\nu^{s_{k-1}} \le \nu^{s_{k}}$ for any $k=1,\ldots,\sharp S$.
\end{itemize}
\item[] (\textbf{$\Gamma$-liminf inequality}) 
Let  $\{\mu_n\}$ be such that for all $s\in (0,1)$, $|\mu_n^s|\weakstar \nu^s$ for some 
family of measures $\{\nu^s\}_{s\in [0,1)}$ as in (3). Then, 
$$
\displaystyle \liminf_{n\to +\infty} \frac{\E_{\e_n}(\mu_n)}{|\log\e_n|}\ge\pi\sum_{k=1}^{\sharp S}(s_k-s_{k-1})\nu^{s_{k-1}}(\Omega)\,.
$$

\item[] (\textbf{$\Gamma$-limsup inequality}) For any $\mu\in X(\Omega)$ and for any
family of measures $\{\nu^s\}_{s\in [0,1)}$ as in (3) with 
$\sum_{k=1}^{\sharp S}(s_k-s_{k-1})\nu^{s_{k-1}}(\Omega) <+\infty$, there exists 
$\{\mu_n\}$ with $\mu_n \in X_{\e_n}(\Om)$ for all $n\in \N$ such that $\mu_n\flacon\mu$, $\mu^s_n\weakstar \mu$ for any $s\in (0,1)$, $|\mu^s_n|\weakstar \nu^s$ for any $s\in (0,1)\setminus S$, and
\begin{equation}\label{limsup}
\displaystyle \lim_{n\to +\infty} \frac{\E_{\e_n}(\mu_n)}{|\log\e_n|} = \pi\sum_{k=1}^{\sharp S}(s_k-s_{k-1})\nu^{s_{k-1}}(\Omega)\,.
\end{equation}
\end{itemize}
\end{theorem}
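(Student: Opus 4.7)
The plan is to decompose the statement into (a) the compactness and $\Gamma$-liminf inequality, which I would reduce directly to Theorem \ref{mains1}, and (b) the $\Gamma$-limsup inequality, which requires an explicit construction.

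For part (a), writing $\mu_n=\sum_i z_{i,n}\delta_{x_{i,n}}$, I would consider the initial family $\mathcal{B}_n:=\{B_{\e_n}(x_{i,n})\}_i$. By the definition of $X_{\e_n}(\Om)$, these balls are pairwise disjoint and contained in $\Om$. With $F$ the minimal functional of \eqref{minf}, Remark \ref{disuene} applied to any $v\in\AS_{\e_n}(\mu_n)$ and minimising over $v$ yields $F(\mathcal{B}_n,\mu_n,\Om)\le\E_{\e_n}(\mu_n)\le C|\log\e_n|$. Since the centres are at mutual distance $\ge 4\e_n$, the ball construction of Proposition \ref{ballconstr} stays frozen on $[0,\tfrac12]$; applying \eqref{sti3} with $t_1=0$, $t_2=\tfrac12$ I get $\pi|\mu_n|(\Om)\log(3/2)\le F(\mathcal{B}_n,\mu_n,\Om)\le C|\log\e_n|$, hence $|\mu_n|(\Om)\le C|\log\e_n|$ and $\sr_n:=\rad(\mathcal{B}_n)\le C\e_n|\log\e_n|\to 0$, with $|\log\sr_n|/|\log\e_n|\to 1$. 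The hypotheses of Theorem \ref{mains1} are then in force, and its items (i)--(iv) give directly (1), (2), (3) and the liminf inequality.

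For part (b), write $\mu=\sum_i d_i\delta_{x_i}$ and, for the (finitely or countably many) jumps $s_k$ of the defect measure,
\[
\Delta_k:=\xi_\df^{s_k}-\xi_\df^{s_{k-1}}=\sum_j m_{k,j}\delta_{p_{k,j}},\qquad m_{k,j}\in\N.
\]
I would fix a sequence $\delta_n\to 0$ with $|\log\delta_n|/|\log\e_n|\to 0$ (e.g.\ $\delta_n=|\log\e_n|^{-1}$) and build $\mu_n$ by splitting each vortex of degree $d_i$ into $|d_i|$ unit masses of sign $\mathrm{sgn}(d_i)$ placed at mutual distance $\delta_n$ around $x_i$, and by placing near each $p_{k,j}$ a cluster of $m_{k,j}$ oppositely charged pairs (dipoles) of size $\e_n^{s_k}$, separated from one another at a larger mesoscopic distance $\e_n^{\sigma_k}$ with $\sigma_k<s_k$ so that all the cores are disjoint. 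Then $\mu_n\in X_{\e_n}(\Om)$, $\mu_n\flacon\mu$, and the elementary mollifier computation---$(\delta_p-\delta_q)\ast\rho_\eta$ has $L^1$-norm $O(|p-q|/\eta)$ when $\eta>|p-q|$ and equal to $2$ (concentrating at the midpoint) when $\eta<|p-q|$---gives $\mu_n^s\weakstar\mu$ for every $s\in(0,1)$ and $|\mu_n^s|\weakstar\nu^s$ for every $s\in(0,1)\setminus S$. The admissible $v_n\in\AS_{\e_n}(\mu_n)$ is built as the product of elementary singular fields $(z-x)/|z-x|$, one per unit mass of $\mu_n$, truncated inside each core. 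The renormalised-energy expansion for a dilute system of vortices then assigns $\pi|\log\e_n|+o(|\log\e_n|)$ to each split mass (the intra-splitting interactions are $O(|\log\delta_n|)=o(|\log\e_n|)$) and $2\pi(1-s_k)|\log\e_n|+O(1)$ to each dipole at scale $\e_n^{s_k}$, while inter-cluster interactions stay uniformly bounded. Summing and invoking the Abel identity
\[
|\mu|(\Om)+2\sum_k(1-s_k)\Delta_k(\Om)=\sum_{k=1}^{\sharp S}(s_k-s_{k-1})\,\nu^{s_{k-1}}(\Om)
\]
delivers \eqref{limsup}.

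The principal difficulty I foresee is the case when $S$ is infinite: the construction above does not produce a single recovery sequence directly, since each individual scale $\e_n^{s_k}$ enters the energy sum. I would handle this by running the construction, for each $L\in\N$, on the truncated datum obtained by freezing $\xi^s_\df$ at $\xi^{s_L}_\df$ for $s\ge s_L$, obtaining sequences $\mu_n^{(L)}$ whose energies converge to the finite partial sum $\pi\sum_{k=1}^L(s_k-s_{k-1})\nu^{s_{k-1}}(\Om)$, and then extract a diagonal sequence $L=L(n)\to\infty$ slowly enough that all the weak-$\star$ and flat convergences, together with the exact energy limit, are preserved. A secondary difficulty is the bookkeeping of all the mesoscopic length scales $\delta_n,\e_n^{\sigma_1},\ldots,\e_n^{\sigma_L},\e_n^{s_1},\ldots,\e_n^{s_L}$: they must be well separated from one another and from $\e_n$ in order to respect $|x_i-x_j|\ge 4\e_n$ and to keep all inter-cluster interactions at order $o(|\log\e_n|)$. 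This is tedious but essentially combinatorial.
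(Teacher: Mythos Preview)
Your overall strategy matches the paper's, but there is a genuine gap in part (a) that you gloss over with the word ``directly''. Theorem \ref{mains1} is formulated with mollifiers at scale $\sr_n^s$, where $\sr_n=\rad(\mathcal B_n)$, whereas Theorem \ref{mainscra} requires the conclusions for mollifiers at scale $\e_n^s$. Since $\sr_n$ can be as large as $C\e_n|\log\e_n|$, these scales differ by a sub-polynomial factor; your observation that $|\log\sr_n|/|\log\e_n|\to 1$ is necessary but not sufficient. In particular, the total variation $|\mu_n\ast\rho_\eta|$ is nonlinear in $\eta$, and passing from $|\mu_n\ast\rho_{\sr_n^s}|\weakstar\nu^s$ to $|\mu_n\ast\rho_{\e_n^s}|\weakstar\nu^s$ is not automatic. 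The paper devotes the bulk of its compactness step to exactly this transfer (the claims labelled \eqref{finecomp1} and \eqref{finecomp2}), arguing via the ball construction at an auxiliary scale $\sigma\in(s,s_k)$ and several applications of Lemma \ref{speriamo}. You should at least acknowledge this step and indicate how Lemma \ref{speriamo}(ii),(iii) close it; the liminf inequality, by contrast, really does follow from $|\log\sr_n|\le|\log\e_n|$ together with Theorem \ref{mains1}(iv), as you say.

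For part (b) your plan is essentially the paper's. The paper simplifies the bookkeeping by a density reduction to $|z_i|=1$ and to defect jumps of the form $2\delta_{y^{s_k}}$ (one dipole per jump), which removes your auxiliary separation scales $\e_n^{\sigma_k}$ entirely; your more general construction would also work but is unnecessarily heavy. Two small points: your Abel identity omits the contribution of $\xi_{\mathrm{def}}^{s_0}$ (the paper handles it with a dipole of length $\sim 1/|\log\e_n|$, which is precisely your $\delta_n$ used elsewhere), and your diagonal argument for infinite $S$ is exactly what the paper means by ``standard density arguments''.
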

\begin{proof}
For any $n\in \N$ we denote by $x_{i,n}$ the points in the support of $\mu_n$ and we set $\bs_n:=\{B_{\en}(x_{i,n})\}$ and $\sr_n:=\rad(\bs_n)$.
Trivially, 
\begin{equation}\label{stimarad}
\sr_n\le \en\, |\mu_n|(\Omega)\,.
\end{equation}

Let $\bs_{n}(t)$ be a time parametrized family of balls given by  Proposition \ref{ballconstr} starting from $\bs_{n}=:\bs_{n}(0)$; we denote by $\cs_{n}(t)$  the family of balls in $\bs_{n}(t)$ that are contained in $\Om$ and by $U_{n}(t)$ the union of the balls in $\bs_{n}(t)$.
Moreover we set
$$
\mu_{n}(t) := \sum_{B \in \cs_{n}(t)}  \deg(u_n, \partial B) \delta_{x_B}\,. 
$$
Let $F$ be as defined in \eqref{minf}. 

\textbf{\emph {Step 1: Proof of compactness.}}
By applying Proposition \ref{ballconstr} (5) with $t_1=0$ and $t_2=1$, we have
\begin{equation}\label{vartotlim}
F(\bs_n,\mu_n,\Omega)\ge \pi\, \log 2\,\sum_{\newatop{B\in\bs_n(1)}{B\subset\Omega}}|\mu_n|(B)=\pi\, \log 2\,|\mu_n|(\Omega)\,,
\end{equation}
where the equality follows by the fact that $\mu_n\in X_{\en}(\Omega)$.
Therefore,  by \eqref{vartotlim},  Remark \ref{disuene} and the energy bound \eqref{eneboundcra}, we obtain
\begin{equation}\label{nuovobounden}
F(\bs_n,\mu_n,\Omega)+|\mu_n|(\Omega)\le C\, F(\bs_n,\mu_n,\Omega)\le C\,\E_{\en}(\mu_n)\le C|\log\en|\,,
\end{equation}
which, in view of \eqref{stimarad}, immediately  implies
\begin{equation*}
F(\bs_n,\mu_n,\Omega)+|\mu_n|(\Omega)\le C|\log\sr_n|\,.
\end{equation*}
By Theorem \ref{mains1} there exist a measure $\mu\in X(\Omega)$, a countable set $S:=\{0<s_1<s_2<\ldots\}$ with $\sup S=1$ and a family of measures $\{\nu^s\}_{s\in [0,1)}\subset X(\Omega)$ satisfying the structure properties in (3), such that up to a subsequence independent of $s$, there holds
\begin{equation}\label{teovecchio1}
\begin{aligned}
&\mu_n\flacon\mu\,, \\ 
&\mu_n\ast\rho_{\sr^s_n}\weakstar\mu \qquad\textrm{for any }s\in (0,1)\,,\\ 
&|\mu_n\ast\rho_{\sr^s_n}|\weakstar\nu^s  \qquad\textrm{for any }s\in (0,1)\setminus S\,.
\end{aligned}
\end{equation}

Therefore, in order to conclude the proof of the compactness it is enough to show that 
\begin{eqnarray}\label{finecomp1}
&\mu_n\ast\rho_{\e^s_n}\weakstar \mu\qquad\textrm{for any }s\in (0,1)\\ \label{finecomp2}
&|\mu_n\ast\rho_{\sr^s_n}|- |\mu_n\ast\rho_{\e^s_n}|\weakstar 0  \qquad\textrm{for any }s\in (0,1)\setminus S\,.
\end{eqnarray}

We preliminarily notice that if $\mu_n\equiv 0$ for $n$ large enough, then  \eqref{finecomp1} and \eqref{finecomp2} are trivially satisfied, so that we can assume without loss of generality that $\mu_n\neq 0$ and hence 
\begin{equation}\label{ipotesifondam}
\sr_n\ge\en\,.
\end{equation}

We start by proving \eqref{finecomp1}. Let $\sigma>0$ and set $t_n^\sigma:=\frac{1}{\e_n^{1-\sigma}}-1$. By construction
\begin{equation*}
(\mu_n - \mu_{n}(t^\sigma_n))(B) = 0\qquad\textrm{ for any }\quad B\in \cs_{n}(t^\sigma_n)\,.
\end{equation*} 
Therefore, by Lemma \ref{speriamo} (i), \eqref{nuovobounden}, Proposition \ref{ballconstr} (5) and \eqref{stimarad}, we have
\begin{multline}\label{comp20}
\|\mu_n - \mu_{n}(t^\sigma_n)\|_{\mathrm{flat}} \le 
 C\,\rad(\bs_{n}(t^\sigma_n))\,(|\mu_n|(\Omega)+|\mu_{n}(t^\sigma_n)|(\Omega))\\
 \le
 C \,\rad( \bs_{n}(t^\sigma_n)) |\log\en| \le C_l |\log\e_n|^{\sigma + 1} \ep^\sigma_n\to 0\quad\textrm{as }n\to +\infty\,,
\end{multline}
which in virtue of \eqref{teovecchio1} yields that, up to subsequences,  
\begin{equation}\label{vecchiolimfla}
\mu_n(t^\sigma_n)\flacon \mu\,.
\end{equation}
Moreover, by \eqref{nuovobounden} and by \eqref{sti3} in Proposition \ref{ballconstr},
 \begin{equation*}
C|\log\en|\ge F(\bs_{n},\mu_{n},\Omega\cap(U_{n}(t^\sigma_n)\setminus U_{n}(0)))\ge \pi|\mu_{n}(t^\sigma_n)|(1-\sigma)|\log\en|\,,
  \end{equation*}
which, together with \eqref{vecchiolimfla},  implies that, up to a subsequence independent of $s$, 
 $\mu_n(t_n^\sigma)\weakstar\mu$ and 
 \begin{equation}\label{convdeb0}
 \mu_n(t_n^\sigma)\ast\rho_{\ep_n^s}\weakstar\mu\,. 
 \end{equation}
 Let now $\sigma>s$. 
By \eqref{comp20}, for any $\f\in C_c(\Omega)$ with $\|\f\|_{L^\infty}\le 1$, we have
\begin{multline*}
|\langle \mu_n\ast\rho_{\ep_n^s} -  \mu_{n}(t_n^{\sigma})\ast\rho_{\ep_n^s} ,\f\rangle|= |\langle \mu_n - \mu_{n}(t_n^{\sigma}) ,\f\ast\rho_{\ep_n^s}\rangle|\\
\le C\| \mu_n -  \mu_{n}(t_n^{\sigma})\|_{\mathrm{flat}} \,\ep_n^{-s}\le C_l |\log\e_n|^{\sigma +1}  \ep_n^{\sigma-s}\to 0\qquad\textrm{as }n\to +\infty\,.
\end{multline*}
This fact combined with \eqref{convdeb0} implies \eqref{finecomp1}.

Finally we prove \eqref{finecomp2}.
Let $s\in (0,1)\setminus S$ and let $k$ be such that $s\in (s_{k-1},s_k)$. Moreover, let $\sigma\in (s,s_k)$ and set $\hat t_n^\sigma:=\frac{1}{\sr_n^{1-\sigma}}-1$.
 By construction, $(\mu_n - \mu_{n}(\hat t^\sigma_n))(B) = 0$  for any  $B\in \cs_{n}(\hat t^\sigma_n)$.
Therefore, by applying Lemma \ref{speriamo} (ii) with $\Bnew=\bs_{n}(\hat t_{n}^\sigma)$, $\alpha=0$, 
$\displaystyle \beta=\mu_n\res\cup_{\newatop{B\in \cs_{n}(\hat t_{n}^{\sigma})}{\mu_n(B)= 0}}$, for any open set $A\subset\subset\Omega$ and   for $n$ large enough we obtain
\begin{multline}\label{noia10}
\Bigg|\sum_{\newatop{B\in \cs_{n}(\hat t_{n}^\sigma)}{\mu_n(B)=0}}(\mu_n\res B)\ast\rho_{\e_n^s}+
\sum_{{B\in \bs_{n}(\hat t_{n}^\sigma)\setminus  \cs_{n}(\hat t_{n}^\sigma)}}(\mu_n\res B)\ast\rho_{\e_n^s}\Bigg|(A)\\
= \Bigg|\sum_{\newatop{B\in \cs_{n}(\hat t_{n}^\sigma)}{\mu_n(B)=0}}(\mu_n\res B)\ast\rho_{\e_n^s}\Bigg|(A)
\le\sum_{\newatop{B\in \cs_{n}(\hat t_{n}^{\sigma})}{\mu_n(B)= 0}}|(\mu_n\res B)\ast \rho_{\ep_n^s}|(A)\\
\le\sum_{\newatop{B\in \cs_{n}(\hat t_{n}^{\sigma})}{\mu_n(B)= 0}}|(\mu_n\res B)\ast \rho_{\ep_n^s}|
\le C_l\,\|\nabla\rho\|_{L^1}|\log\ep_n| ^{1+\sigma}\ep_n^{\sigma-s}\to 0 \qquad\textrm{ as }n\to +\infty\,,
\end{multline}
where the last inequality follows by  the energy bound \eqref{nuovobounden} and by \eqref{stimarad}. Similarly, one can prove that also
\begin{equation}\label{noia20}
\Bigg|\sum_{\newatop{B\in \cs_{n}(\hat t_{n}^\sigma)}{\mu_n(B)=0}}(\mu_n\res B)\ast\rho_{\sr_{n}^s}+
\sum_{{B\in \bs_{n}(\hat t_{n}^\sigma)\setminus  \cs_{n}(\hat t_{n}^\sigma)}}(\mu_n\res B)\ast\rho_{\sr_{n}^s}\Bigg|(A) \to 0\,.
\end{equation}
Moreover, by arguing as in \eqref{follia} and by using \eqref{ipotesifondam}, it is easy to see that,
for $n$ large enough, 
\begin{equation}\label{follia20}
\begin{aligned}
&\Bigg| \sum_{\newatop{B\in \cs_{n}(\hat t_{n}^\sigma)}{\mu_n(B)\neq 0}}  (\mu_n\res B)\ast\rho_{\e_n^s}\Bigg|= \sum_{\newatop{B\in \cs_{n}(\hat t_{n}^\sigma)}{\mu_n(B)\neq 0}} \Big| (\mu_n\res B)\ast\rho_{\e_n^s}\Big|\,,\\
&\Bigg| \sum_{\newatop{B\in \cs_{n}(\hat t_{n}^\sigma)}{\mu_n(B)\neq 0}}  (\mu_n\res B)\ast\rho_{\sr_{n}^s}\Bigg|= \sum_{\newatop{B\in \cs_{n}(t_{n}^\sigma)}{\mu_n(B)\neq 0}} \Big| (\mu_n\res B)\ast\rho_{\sr_{n}^s}\Big|\,.
\end{aligned}
\end{equation}
Let now $\f\in C_c(\Omega)$ with $\|\f\|_{L^\infty}\le 1$. By \eqref{follia20} and by triangular inequality, we have 
\begin{multline} \label{noia30}
\Bigg|\langle \Big| \sum_{\newatop{B\in \cs_{n}(\hat t_{n}^\sigma)}{\mu_n(B)\neq 0}} (\mu_n\res B)\ast\rho_{\e_n^s}\Big| -\Big| \sum_{\newatop{B\in \cs_{n}(\hat t_{n}^\sigma)}{\mu_n(B)\neq 0}} (\mu_n\res B)\ast \rho_{\sr_{n}^s}\Big|,\f\rangle\Bigg|
\\ 
\le \sum_{\newatop{B\in \cs_{n}(\hat t_{n}^\sigma)}{\mu_n(B)\neq 0}} \langle|(\mu_n\res B)\ast\rho_{\e_n^s}-(\mu_n\res B)\ast \rho_{\sr_{n}^s}|,|\f|\rangle
\\
\le\sum_{\newatop{B\in \cs_{n}(\hat t_{n}^\sigma)}{\mu_n(B)\neq 0}} \langle|(\mu_n\res B-(\mu_n)(B)\delta_{x_B})\ast\rho_{\e_n^s}|,|\f|\rangle
+ \sum_{\newatop{B\in \cs_{n}(\hat t_{n}^\sigma)}{\mu_n(B)\neq 0}}  \langle|(\mu_n\res B-\mu_n(B)\delta_{x_B})\ast\rho_{\sr_{n}^s}|,|\f|\rangle\\
+ \sum_{\newatop{B\in \cs_{n}(\hat t_{n}^\sigma)}{\mu_n(B)\neq 0}} \langle|\mu_n(B)\delta_{x_B}\ast\rho_{\e_n^s}-\mu_n(B)\delta_{x_B}\ast\rho_{\sr_{n}^s}|,|\f|\rangle\to 0\qquad\textrm{as }n\to +\infty\,,
\end{multline}
where the convergence to zero of the first two addenda can be proven by applying Lemma \ref{speriamo} (ii) whereas the convergence to zero of the last addendum follows by Lemma \ref{speriamo} (iii) and the triangular inequality.

Then \eqref{finecomp2} follows by \eqref{noia10}, \eqref{noia20} and \eqref{noia30}.
\vskip2mm
\noindent
\emph{\textbf{Step 2: Proof of  the ${\bf \Gamma}$-liminf inequality.}} 
We can assume without loss of generality that the upper bound \eqref{eneboundcra} is satisfied, so that all the convergences in \eqref{teovecchio1} hold true.
Therefore, by Remark \ref{disuene}, \eqref{stimarad}, \eqref{nuovobounden} and Theorem \ref{mains1} (iv)   we have
$$
\liminf_{n\to +\infty} \frac{\E_{\en}(\mu_n)}{|\log \en|}   
\ge  \liminf_{n\to +\infty} \frac{F(\bs_{n}, \mu_{n}, \Om)}{|\log \sr_{n}|}     
\ge
 \pi\sum_{k=1}^{\sharp S}(s_k-s_{k-1})\nu^{s_{k-1}}(\Omega)\,. 
$$
\vskip2mm
\noindent
\emph{\textbf{Step 3: Proof of  the ${\bf \Gamma}$-limsup inequality.}} Let $\mu$, $\{\nu_s\}$ and $S$ be as in the assumptions.
 By standard density arguments we can assume that $K:=\sharp S<+\infty$, so that $S=\{0=s_0<s_1<\ldots<s_K=1\}$. We set $\eta_\df^{s_0}:=\xi_{\df}^{s_0}$ and 
$\eta_\df^{s_k}:=\nu^{s_k}-\nu^{s_{k-1}}$ for any $k=1,\ldots,K$. It is easy to see that 
\begin{equation}\label{nuovogammalim}
 \sum_{k=1}^{K}(s_k-s_{k-1})\nu^{s_{k-1}}(\Omega)=|\mu|(\Omega)+\sum_{k=0}^{K}(1-s_k)\eta_{\df}^{s_k}(\Omega)\,.
\end{equation}
 Again by density arguments we can assume that 
 $\mu=\sum_{i=1}^N z_i\delta_{x_i}$ with $|z_i|=1$ and $x_i\neq x_j$ for $i\neq j$, and that  $\eta_{\df}^{s_k}=2 \delta_{y^{s_k}}$ with 
all $y^{s_k}$ different from each other and from all $x_i$'s. For any $n\in\N$, we define
$$
\mu_n:=\mu+\sum_{k=0}^K  (\delta_{y^{s_k}_{n,+}}- \delta_{y^{s_k}_{n,-}})\,,
$$
where ${y^{s_k}_{n,+}}$ and ${y^{s_k}_{n,-}}$ are two points in $\Omega$ such that  $\di({y^{s_k}_{n,+}},{y^{s_k}_{n,-}})=2\,\di({y^{s_k}_{n,+}},{y^{s_k}})=2\,\e_n^{s_k}$ for any $k=1,\ldots,K$, while for $k=0$ we consider a dipole whose length tends to zero  slower than any power of $\en$; for instance  such that  $\di({y^{s_0}_{n,+}},{y^{s_0}_{n,-}})=2\,\di({y^{s_0}_{n,+}},{y^{s_0}})=2\,\frac{1}{|\log \e_n|}$.
It is immediate to see that, for $n$ large enough $\mu_n\in X_{\en}(\Omega)$,
$\mu_n\flacon\mu$, $\mu_n\ast\rho_{\e_n}^s\weakstar\mu$ for any $s\in (0,1)$ and $|\mu_n\ast\rho_{\e_n}^s|\weakstar\nu^{s}$ for any $s\in (0,1)\setminus S$.

For any $\xi\in\R^2$, consider the standard polar coordinates centered at $\xi$ and let $\theta_{\xi}$ denote the phase, namely the angular coordinate.
We set
\begin{equation}\label{fasen}
\vartheta_n(\cdot):= \sum_{i=1}^M z_i\theta_{x_i}(\cdot)+\sum_{k=0}^K(\theta_{y^{s_k}_{n,+}}(\cdot)-
\theta_{y^{s_k}_{n,-}}(\cdot))
\end{equation}
and $u_n:=e^{i\vartheta_n}$. Trivially, $u_n\in\AS_{\en}(\mu_n)$ and $\int_{\Omega_{\en}{(\mu_n)}}|\nabla u_n|^2\ud x=\int_{\Omega_{\en}{(\mu_n)}}|\nabla \vartheta_n|^2\ud x$\,. 

Now we show that the pair $(\mu_n,u_n)$ is a recovery sequence for the $\Gamma$-limsup inequality \eqref{limsup}.
Recalling \eqref{nuovogammalim},  we have to prove that
\begin{equation}\label{newgammalimsup}
\limsup_{n\to +\infty}\frac{1}{2|\log\en|}\int_{\Omega_{\en}{(\mu_n)}}|\nabla \vartheta_n|^2\ud x\le \pi|\mu|(\Omega)+\pi\sum_{k=0}^{K}(1-s_k)\eta_{\df}^{s_k}(\Omega)\,.
\end{equation}
 
Fix $r>0$ such that the balls $B_r(x_i)$ and $B_r(y^{s_k})$ are pairwise disjoint and compactly contained in $\Omega$. 
Then 
\begin{equation}\label{zeresima}
\Omega_\en(\mu_n)=\Omega_r(\mu_n)\cup\bigcup_{i=1}^N (B_r(x_i)\setminus B_\en(x_i))\cup\bigcup_{k=0}^K \Big(B_{r}(y^{s_k})\setminus \big(B_{\en}(y_{n,+}^{s_k})\cup B_{\en}(y_{n,-}^{s_k})\big)\Big).
\end{equation}

By construction
\begin{equation}\label{prima}
\frac{1}{2}\int_{\Omega_{r}{(\mu_n)}}|\nabla u_n|^2\ud x\le C_r,
\end{equation}
where $C_r$ is a constant independent  of $n$.
Moreover, for any $i=1,\ldots,M$, by using Cauchy-Schwarz inequality together with the fact that $|\nabla\theta_\xi(\cdot)|= \frac{1}{|\cdot -\xi|}$, we deduce 
\begin{equation}\label{seconda}
\frac 1 2\int_{B_{r}(x_i)\setminus B_\en(x_i)}|\nabla\vartheta_n|^2\ud x\le \frac 1 2\int_{B_{r}(x_i)\setminus B_\en(x_i)}|\nabla\theta_{x_i}|^2\ud x+ C=\pi\log\frac {r}{\en}+C\,,
\end{equation}
for some positive constant $C>0$.
Arguing analogously, one can easily check that for any $k=0,1,\ldots,K$  there holds
\begin{multline}\label{terza}
\frac 1 2\int_{B_{\e_n^{s_k}}(y^{s_k}_{n,\pm})\setminus B_\en(y^{s_k}_{n,\pm})}|\nabla\vartheta_n|^2\ud x\le\frac 1 2\int_{B_{\e_n^{s_k}}(y^{s_k}_{n,\pm})\setminus B_\en(y^{s_k}_{n,\pm})}|\nabla\theta_{y^{s_k}_{n,\pm}}|^2\ud x\\
+\frac 1 2\int_{B_{\e_n^{s_k}}(y^{s_k}_{n,\pm})\setminus B_\en(y^{s_k}_{n,\pm})}|\nabla\theta_{y^{s_k}_{n,\mp}}|^2\ud x+
\int_{B_{\e_n^{s_k}}(y^{s_k}_{n,\pm})\setminus B_\en(y^{s_k}_{n,\pm})}|\nabla\theta_{y^{s_k}_{+}}|\,|\nabla\theta_{y^{s_k}_{-}}|\ud x
+C\\
\le \pi(1-s_k)|\log\en|+C\,.
\end{multline} 

Furthermore, by straightforward computations, it follows that for any $k=0,1,\ldots,K$ 
\begin{multline}\label{quarta}
\frac 1 2\int_{B_{r}(y^{s_k})\setminus B_{8\e_n^{s_k}}(y^{s_k})}|\nabla\vartheta_n|^2\ud x\le \frac 1 2\int_{B_{r}(y^{s_k})\setminus B_{8\e_n^{s_k}}(y^{s_k})}|\nabla (\theta_{y^{s_k}_{n,+}}-\theta_{y^{s_k}_{n,-}})|^2\ud x+C
\le C.
\end{multline}

Finally, by scaling arguments, it is easy to see that for any $k=0,1,\ldots,K$
\begin{equation}\label{quinta}
\frac 1 2\int_{B_{8\e^{s_k}_n}(y^{s_k})\setminus (B_{\e^{s_k}_n}(y^{s_k}_{+})\cup B_{\en}(y^{s_k}_{-}))}|\nabla\vartheta_n|^2\ud x \le C\,.
\end{equation}

By summing \eqref{prima}, \eqref{seconda}, \eqref{terza}, \eqref{quarta}, \eqref{quinta}, and recalling \eqref{zeresima}, we obtain \eqref{newgammalimsup}, i.e., the claim. 


\end{proof}
\section{$\Gamma$-convergence of $GL_\e$}
This section is devoted to the $\Gamma$-convergence analysis of the Ginzburg-Landau functionals \eqref{GL}.

\begin{theorem}\label{mainsGL} 
Let $\{\e_n\}\subset\R^+$ with $\e_n\to 0$ as $n\to +\infty$.  The following $\Gamma$-convergence result holds true. 
\begin{itemize}
\item[] (\textbf{Compactness})
Let $\{u_n\} \subset H^1(\Om;\R^2)$  be such that 
\begin{equation}\label{eneboundgl}
 GL_{\ep_n}(u_n) \le C |\log \e_n| \qquad \text{ for all } n\in \N,
\end{equation}
for some  constant $C>0$ independent of $n$.
Set $J^s u_n:=Ju_n\ast\rho_{\e_n^s}$ for any $n\in\N$ and for any $0<s<1$. 

Then, there exist a measure $\mu\in X(\Omega)$,
a countable (finite or infinite) set $S:=\{0=s_0<s_1<s_2<\ldots\}$ with $\sup S=1$, and 
a family  of positive Radon measures $\{\nu^s\}_{s\in [0,1)} \subset X(\Omega)$,  such that  the following compactness and structure properties hold true:
\begin{itemize}
\item[(1)] \it{Flat and weak star compactness for the Jacobians}: Up to a subsequence $Ju_n\flacon \pi \mu$ and
for any $s\in (0,1)$,  $J^s u_n \weakstar \pi\mu$ up to a  subsequence (independent of $s$).
\item[(2)] \it{Weak star compactness for vorticity densities}: For any $s\in (0,1)\setminus S$, $|J^s u_n|\weakstar \pi \nu^s$  up to a  subsequence (independent of $s$).
\item[(3)] \it{Structure of vorticity densities}: 
For all $s\in[0,1)$,  $\nu^s= |\mu| + 2 \xi_{\mathrm{def}}^s$ for some {\it defect measure}  $\xi^s_{\mathrm{def}}\in X(\Om)$ with $\xi^s_{\mathrm{def}}\ge 0$. 
Moreover, $\nu^s$  is constant in $[s_{k-1},s_{k})$ and
$\nu^{s_{k-1}} \le \nu^{s_{k}}$ for any $k=1,\ldots,\sharp S$.
\end{itemize}
\item[] (\textbf{${\bf \Gamma}$-liminf inequality}) 
 Let $\{u_n\} \subset H^1(\Om;\R^2)$ be such that for all $s\in (0,1)$, $|J^s u_n|\weakstar \pi \nu^s$ for some family of measures $\{\nu_s\}_{s\in[0,1)}$ as in (3). Then, 
$$
\displaystyle \liminf_{n\to +\infty} \frac{GL_{\e_n}(u_n)}{|\log\e_n|}\ge\pi\sum_{k=1}^{\sharp S}(s_k-s_{k-1})\nu^{s_{k-1}}(\Omega)\,.
$$

\item[] (\textbf{${\bf \Gamma}$-limsup inequality}) For any $\mu\in X(\Omega)$ and for any
family of measures $\{\nu^s\}_{s\in [0,1)}$ as in (3)  with 
$\sum_{k=1}^{\sharp S}(s_k-s_{k-1})\nu^{s_{k-1}}(\Omega) <+\infty$, there exists $\{u_n\} \subset H^1(\Om;\R^2)$ 
such that $Ju_n\flacon \pi \mu$, $J^s u_n\weakstar \pi \mu$ for any $s\in (0,1)$, $|J^s u_n|\weakstar \pi \nu^s$ for any $s\in (0,1)\setminus S$, and
\begin{equation}\label{limsupgl}
\displaystyle \lim_{n\to +\infty} \frac{GL_{\e_n}(u_n)}{|\log\e_n|} = \pi\sum_{k=1}^{\sharp S}(s_k-s_{k-1})\nu^{s_{k-1}}(\Omega)\,.
\end{equation}
\end{itemize}
\end{theorem}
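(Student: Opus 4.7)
The plan is to reduce the theorem to Theorem \ref{mains1} via a Sandier--Jerrard ball construction extracting integer-valued vortex measures $\mu_n$ from $\{u_n\}$, and then to bridge between these $\mu_n$ and the Jacobians $Ju_n$ using Proposition \ref{sempre} and Lemma \ref{speriamo}. The overall architecture mirrors the proof of Theorem \ref{mainscra}.

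\emph{Setup and compactness.} Fix $\tau\in(1/2,1)$. Under the energy bound \eqref{eneboundgl}, the classical ball construction produces, for each $n$, a finite family $\bs_n$ of pairwise disjoint balls covering $\{|u_n|\le\tau\}$, with total radius $\sr_n=O(\en|\log\en|)$, together with a measure $\mu_n:=\sum_{B\in\cs_n}\deg(u_n/|u_n|,\partial B)\delta_{x_B}$, where $\cs_n:=\{B\in\bs_n:B\subset\Om\}$. Taking $F(\bs,\mu,\cdot)$ as in \eqref{minf}, Remark \ref{disuene} applied to $u_n/|u_n|\in H^1(\Om\setminus\bigcup\bs_n;S^1)$, together with \eqref{eneboundgl}, yields
$$
F(\bs_n,\mu_n,\Om)+|\mu_n|(\Om)\le C\,GL_{\en}(u_n)\le C|\log\en|\le C|\log\sr_n|.
$$
Theorem \ref{mains1} then furnishes $\mu\in X(\Om)$, a countable set $S=\{0=s_0<s_1<\ldots\}$ with $\sup S=1$, and a family $\{\nu^s\}_{s\in[0,1)}\subset X(\Om)$ satisfying property (3), together with convergences (up to a subsequence independent of $s$) $\mu_n\flacon\mu$, $\mu_n\ast\rho_{\sr_n^s}\weakstar\mu$ for every $s\in(0,1)$ and $|\mu_n\ast\rho_{\sr_n^s}|\weakstar\nu^s$ for every $s\in(0,1)\setminus S$.

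\emph{Transfer to Jacobians and $\Gamma$-liminf inequality.} By Proposition \ref{sempre}, $\|Ju_n-J_\tau u_n\|_\flap\le C\en|\log\en|\to 0$. Since $|u_{n,\tau}|=1$ outside $\bigcup\bs_n$, $J_\tau u_n\,\ud x$ is supported in $\bigcup\bs_n$, and \eqref{defcur} gives $\int_B J_\tau u_n\,\ud x=\pi\mu_n(B)$ for every $B\in\cs_n$. Lemma \ref{speriamo}(i) applied with $\alpha:=\pi\mu_n$ and $\beta:=J_\tau u_n\,\ud x$, together with the bound $|J_\tau u_n|(\Om)\le C|\log\en|$ from Proposition \ref{sempre}, yields $\|J_\tau u_n-\pi\mu_n\|_\flap\le C\sr_n|\log\en|\to 0$, whence $Ju_n\flacon\pi\mu$. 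The weak-star statements in (1) and (2) follow by an algebra strictly analogous to Step 1 of the proof of Theorem \ref{mainscra}: using Lemma \ref{speriamo}(ii)-(iii) with $\eta=\en^s$ one obtains $J^s u_n-\pi\,\mu_n\ast\rho_{\en^s}\weakstar 0$ and $|J^s u_n|-\pi\,|\mu_n\ast\rho_{\en^s}|\weakstar 0$, while the transition from mollification scale $\sr_n^s$ to scale $\en^s$ in the target measures proceeds exactly as in \eqref{finecomp1}--\eqref{finecomp2}. Property (3) is inherited directly from Theorem \ref{mains1}. For the $\Gamma$-liminf, given $\{u_n\}$ with $|J^s u_n|\weakstar\pi\nu^s$ and finite liminf, we may restrict to an energy-bounded subsequence, apply the compactness above, and identify the ball-construction limits with the prescribed $\nu^s$ via uniqueness of weak-star limits. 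Using $|\log\sr_n|\sim|\log\en|$ and Remark \ref{disuene}, Theorem \ref{mains1}(iv) then delivers the desired lower bound.

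\emph{$\Gamma$-limsup inequality and main obstacle.} For the limsup, let $(\mu_n,v_n)$ be the recovery pair built in the proof of the $\Gamma$-limsup of Theorem \ref{mainscra}, with $v_n=e^{i\vartheta_n}\in\AS_{\en}(\mu_n)$ and $\vartheta_n$ as in \eqref{fasen}. Extend $v_n$ to $u_n\in H^1(\Om;\R^2)$ by gluing, on each core disk $B_{\en}(x)$ around a singularity $x$ of $\mu_n$, the map $u_n(y):=\eta_\en(|y-x|)e^{i\vartheta_n(y)}$, where $\eta_\en:[0,\en]\to[0,1]$, $\eta_\en(0)=0$, $\eta_\en(\en)=1$, is a fixed optimal radial profile for the one-dimensional GL energy. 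A direct computation gives $GL_\en(u_n)=\tfrac12\int_{\Om_\en(\mu_n)}|\nabla v_n|^2\ud x+O(|\mu_n|(\Om))$, with $|\mu_n|(\Om)=O(1+\sharp S)$ uniformly in $n$, so that \eqref{limsupgl} follows from \eqref{limsup}; the convergences of $Ju_n$, $J^s u_n$, $|J^s u_n|$ to the prescribed limits are checked by direct inspection on this explicit construction. The most delicate point throughout is the weak-star convergence of $|J^s u_n|$ to $\pi\nu^s$ for $s\in(0,1)\setminus S$: at scales $\en^s$ close to an element of $S$, convolutions $(Ju_n\res B)\ast\rho_{\en^s}$ attached to distinct vortex balls may overlap, producing cancellations that would spoil the identity $|J^s u_n|\approx\pi|\mu_n\ast\rho_{\en^s}|$. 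Handling this requires the separation-of-supports argument used in \eqref{follia}, which in turn rests on the polylogarithmic bound $\sr_n=O(\en|\log\en|)$ so that $\sr_n^{s_2-s_1}$ is negligible for every $s_1<s_2$ in a common component of $(0,1)\setminus S$.
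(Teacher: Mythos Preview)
Your overall architecture is correct and closely parallels the paper's proof, but there is a genuine gap in the $\Gamma$-liminf step that costs you the sharp constant.

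When you write ``Remark \ref{disuene} \ldots\ delivers the desired lower bound'', note that Remark \ref{disuene} applies to $S^1$-valued maps. You are forced to use $u_n/|u_n|$ on $\Om_{n,\tau}:=\{|u_n|>\tau\}$, and there $|\nabla u_n|^2\ge |u_n|^2|\nabla(u_n/|u_n|)|^2\ge\tau^2|\nabla(u_n/|u_n|)|^2$. Hence Remark \ref{disuene} only yields $GL_{\en}(u_n)\ge\tau^2 F(\bs_n,\mu_n,\Om)$, and Theorem \ref{mains1}(iv) gives
\[
\liminf_{n\to+\infty}\frac{GL_{\en}(u_n)}{|\log\en|}\ge\tau^2\,\pi\sum_{k}(s_k-s_{k-1})\nu^{s_{k-1}}(\Om),
\]
with the parasitic factor $\tau^2<1$. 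To remove it you must either (a) show that the limits $\mu,\nu^s,S$ extracted via the ball construction are independent of $\tau$ (which you do implicitly, since they are identified with the weak-star limits of $J^su_n$ and $|J^su_n|$) and then let $\tau\uparrow 1$, or (b) integrate over $\tau$. The paper chooses (b): via the coarea formula it derives $GL_{\en}(u_n)\ge\int_0^1 2\tau\,\Theta_n(\tau)\ud\tau$ with $\Theta_n(\tau)=\tfrac12\int_{\Om_{n,\tau}}|\nabla(u_n/|u_n|)|^2$, then applies Fatou and the ball-construction lower bound for each $\tau$, so that the factor $\int_{2/l}^{1-2/l}2\tau\ud\tau\to 1$ replaces your $\tau^2$. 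Either route works, but you must state one of them; as written your argument only proves the liminf with constant $\pi\tau^2$.

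A secondary point: the assertion $F(\bs_n,\mu_n,\Om)+|\mu_n|(\Om)\le C\,GL_{\en}(u_n)$ is not automatic for the \emph{initial} balls. The degrees on the initial covering of $\{|u_n|\le\tau\}$ carry no a priori total-variation bound. The paper handles this by first running the ball construction to time $t=1$ (doubling the radii) and working with $\bs_{n,\bar\tau}(1)$, $\mu_{n,\bar\tau}(1)$: then \eqref{sti3} with $t_1=0$, $t_2=1$ gives $\pi\log 2\,|\mu_{n,\bar\tau}(1)|(\Om)\le F(\bs_{n,\bar\tau},\mu_{n,\bar\tau},\Om)\le C_l|\log\en|$, which is exactly the hypothesis \eqref{enebound} of Theorem \ref{mains1}. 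You should insert this preliminary growth step.

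The limsup construction and the transfer estimates between $Ju_n$, $J_\tau u_n$ and $\pi\mu_n$ are essentially as in the paper.
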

\begin{proof}
First we will prove the compactness properties and the $\Gamma$-liminf inequality, following the approach in \cite{S, AP}. By standard density arguments in $\Gamma$-convergence we may assume that the functions $u_n$ are smooth. 
\vskip2mm
\noindent
\emph{\textbf{Step 1: Energy estimates.}}
 Following the notations in \cite{S,AP}, for any $n\in\N$ and for any $\tau \in (0,1)$ we set  
\begin{eqnarray*}
\Om_{n,\tau}:= \{|u_n| > \tau\}, \qquad \gamma_{n,\tau}:= \partial \Om_{n,\tau}\setminus \partial \Om, \qquad K_{n,\tau}:=\Omega\setminus\Omega_{n,\tau},\\
\Theta_{n}(\tau):= \frac12 \int_{\Om_{n,\tau}} \left|\nabla\frac{u_n}{|u_n|}\right|^2\ud x, \qquad m_{n}(\tau):= \int_{\gamma_{n,\tau}} |\nabla |u_n||\ud\hs^1.
\end{eqnarray*}
By the Coarea Formula we have
\begin{equation}\label{GLcoarea}
GL_{\e_n}(u_n) \ge \frac12 \int_0^\infty \left( m_n(\tau) + \frac{2 W(\tau)}{\e_n^2}\int_{\gamma_{n,\tau}} \frac{1}{ |\nabla |u_n||}\ud\hs^1\right) \ud\tau - \int_0^\infty \tau^2 \ud \Theta_n'(\tau) ,
\end{equation}
where  $\Theta_n'(\tau)$ is the distributional derivative of the decreasing function $\Theta_n(\tau)$ and the inequality is due to the possible presence of flat regions $\{\nabla |u_n| = 0\}$ with positive measure.   
\par 

Fix $l\in\N$ (large). For any $\tau \le 1-\frac{1}{l}$ and $n$ large enough 
we have 
$
|K_{n,\tau}|\le C_l \, \e_n^{2} |\log\e_n| \le \frac{1}{2}|\Om|$, so that, using also that $\Om$ is Lipschitz we have 
\begin{equation*}
\hs^1(\partial K_{n,\tau})\le C_l \hs^1(\gamma_{n,\tau}). 
\end{equation*}
Notice that, by definition of Hausdorff measure, 
since $\partial K_{n,\tau}$ is compact, it is always contained in a finite  union of balls $B_{r_i}(y_i)$ such that $\sum_i r_i \le  \hs^1(\partial K_{n,\tau})$. Moreover, after a merging procedure, we can always assume that such balls are disjoint.  As a consequence, either $K_{n,\tau}$ or $\Om\setminus K_{n,\tau} = \Om_{n,\tau}$ is  contained in the union of such balls. In the latter case, since  $|\Om\setminus K_{n,\tau}|\ge  \frac{1}{2}|\Om|$ we have $\sum_i r_i \ge C$, and we  replace these balls by  one single ball containing $\Om$. In both cases, we have a family of balls 
$\bs_{n,\tau}$ whose union contains $K_{n,\tau}$,  such that 
\begin{equation}\label{iso2}
\rad (\bs_{n,\tau})\le C \hs^1(\partial K_{n,\tau}) \le C_l \hs^1(\gamma_{n,\tau}). 
\end{equation}
Notice that we can always assume (just by enlarging an arbitrarily chosen ball in $\bs_{n,\tau}$) that
\begin{equation*}
\en \le \rad (\bs_{n,\tau})\le  C_l \hs^1(\gamma_{n,\tau}) +\en. 
\end{equation*}
Since $ K_{n,\tau}$ is monotone in $\tau$ (with respect to inclusion), for any given $\eta >0$ (possibly depending on $n$) we can always  assume that
the map $\tau \mapsto \rad(\bs_{n,\tau})$ is measurable and  
\begin{equation}\label{disraggi}
\e_n\le\rad(\bs_{n,\tau_1})\le\rad(\bs_{n,\tau_2}) + \eta \qquad \text{ for all }  0<\tau_1< \tau_2 <1-\frac{1}{l}.
\end{equation}
By H\"older inequality  we have
\begin{equation}\label{Ho}
\hs^1(\gamma_{n,\tau})^2 \le C\, m_n(\tau) \int_{\gamma_{n,\tau}} \frac{1}{|\nabla |u_n||}\ud\hs^1.
\end{equation}
Fix $\bar \tau \in (\frac 2l, 1-\frac{2}{l})$.  By using, in order of appearance, \eqref{GLcoarea}, \eqref{Ho}, Young inequality, integration by parts of $\tau^2 \ud\Theta_n'(\tau)$, and \eqref{iso2}, we obtain
\begin{eqnarray}
\nonumber
GL_{\e_n}(u_n) \ge \frac12 \int_0^{\infty}  m_n(\tau) +  \frac{\hs^1(\gamma_{n,\tau})^2 W(\tau)}{C \e_n^2 m_n(\tau)}\ud \tau      -  \int_0^{\infty}  \tau^2 \ud\Theta_n'(\tau)  
\\ \nonumber
\ge \int_0^{\infty} \frac{\sqrt{W(\tau)}}{ C \e_n} \hs^1(\gamma_{n,\tau})+ 2 \tau\Theta_n(\tau) \ud \tau
\\ \label{Modica0}
\ge \int_{0}^{1-\frac 1l} \frac{\sqrt{W(\tau)}}{ C_l \e_n} (\rad(\bs_{n,\tau}) -\en)     + 2 \tau\Theta_n(\tau) \ud \tau 
\\ \label{Modica}
\ge \int_{0}^{\bar \tau} \frac{\sqrt{W(\tau)}}{ C_l \e_n} (\rad(\bs_{n,\tau}) -\en)     + 2 \tau\Theta_n(\tau) \ud \tau 
\\
\label{Modica2}
+ \int_{ \bar\tau}^{\bar\tau + \frac 1l} \frac{\sqrt{W(\tau)}}{ C_l \e_n} (\rad(\bs_{n,\tau}) - \en) + 2 \tau\Theta_n(\tau) \ud \tau\,.
\end{eqnarray}
By \eqref{disraggi}, \eqref{Modica2} and the energy bound \eqref{eneboundgl}, 
 it follows that, for $\eta$ small enough (depending on $n$), 
\begin{equation}\label{pelo}
\en \le \rad(\bs_{n,\bar\tau})  \le  C_l  (\e_n|\log\e_n|  + \en+  \eta) \le  C_l\, \e_n|\log\e_n|\,.
\end{equation} 

Let $\bs_{n,\bar\tau}(t)$ be a time parametrized family of balls given by  Proposition \ref{ballconstr} starting from $\bs_{n,\bar\tau}=:\bs_{n,\bar\tau}(0)$; we denote by $\cs_{n,\bar\tau}(t)$  the family of balls in $\bs_{n,\bar\tau}(t)$ that are contained in $\Om$ and by $U_{n,\bar\tau}(t)$ the union of the balls in $\bs_{n,\bar\tau}(t)$.
Moreover we set
$$
\mu_{n,\bar\tau}(t) := \sum_{B \in \cs_{n,\bar\tau}(t)}  \deg(u_n, \partial B) \delta_{x_B}\,.
$$
Let $F$ be as defined in \eqref{minf}. 
  By \eqref{sti3} in Proposition \ref{ballconstr}, for any $t_n\ge 1$ we have
 \begin{equation}\label{pelo2}
 F(\bs_{n,\bar\tau}(0),\mu_{n,\bar\tau}(0),\Omega\cap(U_{n,\bar\tau}(t_n)\setminus U_{n,\bar\tau}(0)))\ge \pi|\mu_{n,\bar\tau}(t_n)|\log (1+t_n)\,.
  \end{equation}
Moreover, \eqref{Modica} implies in particular that 
$$
GL_{\e_n}(u_n) \ge 
 \int_0^{\bar\tau}  2 \tau\Theta_n(\tau) \ud \tau \ge C_l \Theta_n(\bar\tau)\,,
$$
which together with  \eqref{pelo2},  Remark \ref{disuene}, the energy bound  and \eqref{pelo} yields
 \begin{multline}\label{sommar}
 \log(1+t_n)|\mu_{n,\bar\tau}(t_n)| \le C  F(\bs_{n,\bar\tau}(0),\mu_{n,\bar\tau}(0),\Omega) \le C \Theta_{n}(\bar\tau) \\
 \le C_l |\log \e_n| \le C_l |\log \rad(\bs_{n,\bar\tau})|\,.
 \end{multline}
 In particular, by applying \eqref{sommar} with $t_n=1$, we  have
 \begin{equation}\label{boundfprel}
|\mu_{n,\bar\tau}(1)|   \le C_l\,F(\bs_{n,\bar\tau}(0),\mu_{n,\bar\tau}(0),\Omega)\le C_l |\log \rad(\bs_{n,\bar\tau})|\,.
  \end{equation}
Moreover, by construction (see also \eqref{minf}),
$$
F(\bs_{n,\bar\tau}(1),\mu_{n,\bar\tau}(1),\Omega)\le F(\bs_{n,\bar\tau}(0),\mu_{n,\bar\tau}(0),\Omega)\le  C_l |\log \rad(\bs_{n,\bar\tau})|\,,
$$
which, in virtue of \eqref{boundfprel} and Proposition \eqref{ballconstr} (5), implies
 \begin{equation}\label{boundf}
F(\bs_{n,\bar\tau}(1),\mu_{n,\bar\tau}(1),\Omega)+|\mu_{n,\bar\tau}(1)| \le C_l |\log \rad(\bs_{n,\bar\tau})| \le C_l |\log \rad(\bs_{n,\bar\tau}(1))| \,.
  \end{equation}

\vskip2mm
\noindent
\emph{\textbf{Step 2: Proof of  compactness.}} 
Let $\bar\tau\in(\frac  2 l,1-\frac 1 l)$.
By \eqref{boundf} and by Theorem \ref{mains1}, there exist $\mu_{\bar\tau}\in X(\Om)$ and $\{\nu_{\bar\tau}^s\}_{s\in [0,1)}\subset X(\Om)$ such that, up to a subsequence independent of $s$,
\begin{equation}\label{comp1}
 \mu_{n,{\bar\tau}}(1) \flacon \mu_{\bar\tau},\quad  \mu_{n,{\bar\tau}}^s(1)\weakstar\mu_{\bar\tau}\qquad\textrm{for any }s\in (0,1)
\end{equation}
and
\begin{equation*}
|\mu_{n,{\bar\tau}}^s(1)|\weakstar\nu^s_{\bar\tau}\qquad\textrm{for any }s\in (0,1)\setminus S_{\bar\tau},
\end{equation*}
where $S_{\bar\tau}$ is the set constructed in the proof of Theorem \ref{mains1}. Moreover  the measures $\nu_{\bar\tau}^s$ satisfy all the structure properties in (3).

Firstly, we show that, up to a subsequence, $Ju_n\flacon \pi\mu_{\bar\tau }$ and that actually  $\mu_{\bar\tau}$ does not depend on $\bar\tau$. 
By construction and by the very definition of $J_{\bar\tau}u_n$ (see \eqref{defcur} and \eqref{moja}) we have that for any $t_n\ge 1$
\begin{equation*}
(J_{\bar\tau}u_n - \pi \mu_{n,\bar\tau}(t_n))(B) = 0\qquad\textrm{ for any }\quad B\in \cs_{n,\bar\tau}(t_n)\,.
\end{equation*} 
Therefore, by triangular inequality, Proposition \ref{sempre}, Lemma \ref{speriamo} (i), Proposition \ref{ballconstr} (5), \eqref{pelo} and \eqref{boundf}, for any $t_n\ge 1$ and for $n$ large enough we have
\begin{multline}\label{comp2}
\|Ju_n - \pi \mu_{n,\bar\tau}(t_n)\|_{\mathrm{flat}} \le \|Ju_n - J_{\bar\tau}u_n\| _{\mathrm{flat}}+\|J_{\bar\tau}u_n-\pi \mu_{n,\bar\tau}(t_n)\|_{\mathrm{flat}}\\
\le C_l\ep_n|\log\ep_n| +C\,\rad(\bs_{n,\bar\tau}(t_n))\,(|J_{\bar\tau}u_n|(\Omega)+\pi|\mu_{n,\bar\tau}(t_n)|(\Omega))\\
\le C_l\ep_n|\log\ep_n|+
 C_l \,\rad( \bs_{n, \bar\tau}(t_n))  (|\log\e_n|+|\mu_{n,\bar\tau}(1)|(\Omega)) \\
 \le C_l |\log\e_n|^2 (1+t_n)\,\ep_n\,.
\end{multline}
Therefore, by \eqref{comp1}, applying \eqref{comp2} with $t_n= 1$ and  setting $\mu:=\mu_{\bar\tau}$, we get
\begin{equation}\label{comp3}
Ju_n\flacon\pi\mu, \qquad \mu_{n,\bar\tau}(1)\flacon\mu\qquad\textrm{for any }\bar\tau\in\left(\frac 2 l, 1-\frac 2 l\right), 
\end{equation}
up to a subsequence independent of $\bar\tau$.

\vskip2mm
We now prove that $J^s u_n\equiv Ju_n\ast \rho_{\ep_n^s}\weakstar \pi\mu$ for any $s\in (0,1)$ up to a subsequence independent of $s$. To this end, fix $s\in (0,1)$ and let $\sigma>s$.  By applying \eqref{comp2}  with $t_n=t_n^\sigma:=\frac{1}{\ep_n^{1-\sigma}}-1$, for any $\f\in C_c(\Omega)$ with $\|\f\|_{L^\infty}\le 1$, we get
\begin{multline}\label{convdeb}
|\langle Ju_n\ast\rho_{\ep_n^s} - \pi \mu_{n,\bar\tau}(t_n^{\sigma})\ast\rho_{\ep_n^s} ,\f\rangle|= |\langle Ju_n - \pi \mu_{n,\bar\tau}(t_n^{\sigma}) ,\f\ast\rho_{\ep_n^s}\rangle|\\
\le C\| Ju_n - \pi \mu_{n,\bar\tau}(t_n^{\sigma})\|_{\mathrm{flat}} \,\ep_n^{-s}\le C_l |\log\e_n|^2 \ep_n^{\sigma-s}\,.
\end{multline}
By using \eqref{comp2} with $t_n=t_n^\sigma$ and by \eqref{comp3}, $\mu_{n,\bar\tau}(t_n^{\sigma})\flacon\mu$; moreover,  by \eqref{sommar},
$|\mu_{n,\bar\tau}(t_n^\sigma)|\le\frac{C_l}{1-\sigma}$, so that
 $\mu_{n,\bar\tau}(t_n^{\sigma})\weakstar\mu$ and $\mu_{n,\bar\tau}(t_n^{\sigma})\ast\rho_{\ep_n^s} \weakstar\mu$.
This latter fact, combined with \eqref{convdeb}  implies that $Ju_n\ast \rho_{\ep_n^s}\weakstar \pi\mu$, as claimed above.

\vskip2mm
In order to conclude the proof of (i), we  show that, up to a subsequence independent of $s$, $|Ju_n\ast \rho_{\e_n^s}| \weakstar \pi\nu_{\bar\tau }^s$ for any $s\in(0,1)\setminus S_{\bar\tau}$ and that actually  $\nu_{\bar\tau }^s$ 
do not depend on $\bar\tau$ and, in turn, $S_{\bar\tau}$ can be also chosen  independent of  $\bar\tau$.
We denote by $s_k$ the points in the set $S_{\bar\tau}$ with $s_0=0$ and $s_{k-1}< s_k$ for any $k=1,\ldots,\sharp S$.
Set  $\sr_{n,\bar\tau}:=\rad(\bs_{n,\bar\tau})$ and $t_{n,\bar\tau}^\sigma:=\frac{1}{\sr_{n,\bar\tau}^{1-\sigma}}-1$ for any $\sigma\in (0,1)$. We recall that,   by \eqref{weakconv}, if $s\in (s_{k-1},s_k)$ for some $k$, then for any  $\sigma\in (s,s_k)$
\begin{equation}\label{comeweakconv}
|\mu_{n,\bar\tau}(t_{n,\bar\tau}^\sigma)|\weakstar \nu_{\bar\tau}^s\qquad\textrm{as }n\to +\infty\,.
\end{equation}

By triangular inequality,  to prove the claim 
 it is enough to show that for any $s\in(0,1)\setminus S_{\bar\tau}$, (up to a subsequence independent of $s$) there holds
\begin{eqnarray}\label{uno}
|Ju_n\ast \rho_{\e_n^s}|- |J_{\bar\tau}u_n\ast \rho_{\e_n^s}|\weakstar 0\qquad\textrm{ as }n\to +\infty\, ,\\ \label{due} 
|J_{\bar\tau}u_n\ast \rho_{\e_n^s}|-|J_{\bar\tau}u_n\ast \rho_{\sr_{n,\bar\tau}^s}| \weakstar 0 \qquad\textrm{ as }n\to +\infty \, , \\ \label{tre}
|J_{\bar\tau}u_n\ast \rho_{\sr_{n,\bar\tau}^s}| \weakstar \nu_{\bar\tau}^s\qquad\textrm{ as }n\to +\infty\,.
\end{eqnarray}

As for the proof of \eqref{uno}, notice that, by Proposition \ref{sempre} and the energy bound, for any $\f\in C_c(\Omega)$ with $\|\f\|_{L^\infty}\le 1$ we have
\begin{multline*}
\left|\langle|Ju_n\ast \rho_{\e_n^s}|- |J_{\bar\tau}u_n\ast \rho_{\e_n^s}|,\f\rangle\right|\le \langle |(Ju_n- J_{\bar\tau}u_n)\ast \rho_{\e_n^s}|,|\f|\rangle\le \langle |Ju_n- J_{\bar\tau}u_n|\ast \rho_{\e_n^s},|\f|\rangle\\
\le  \|Ju_n- J_{\bar\tau}u_n\|_{\flap}\,\|\nabla(|\f|\ast \rho_{\e_n^s})\|_{L^\infty}\le C_l \ep_n^{1-s}|\log\ep_n|\to 0 \qquad\textrm{ as }n\to +\infty\,.
\end{multline*}
Let us pass to the proof of \eqref{due}.
Let $s\in (0,1)\setminus S_{\bar\tau}$, let $k$ be such that $s\in (s_{k-1},s_k)$, and let $\sigma\in (s,s_k)$.
Let $A\subset\subset\Omega$ be open. 
By arguing as in  \eqref{noia10} and \eqref{noia20} and replacing $\mu_n$ with $J_{\bar\tau}u_n$, we get
\begin{eqnarray}\label{noia1}
\quad\Bigg|\sum_{\newatop{B\in \cs_{n,\bar\tau}(t_{n,\bar\tau}^\sigma)}{J_{\bar\tau}u_n(B)=0}}(J_{\bar\tau}u_n\res B)\ast\rho_{\e_{n,\bar\tau}^s}+
\sum_{{B\in \bs_{n,\bar\tau}(t_{n,\bar\tau}^\sigma)\setminus  \cs_{n,\bar\tau}(t_{n,\bar\tau}^\sigma)}}(J_{\bar\tau}u_n\res B)\ast\rho_{\e_{n,\bar\tau}^s}\Bigg|(A) \to 0\,,\\ \label{noia2}
\quad\Bigg|\sum_{\newatop{B\in \cs_{n,\bar\tau}(t_{n,\bar\tau}^\sigma)}{J_{\bar\tau}u_n(B)=0}}(J_{\bar\tau}u_n\res B)\ast\rho_{\delta_{n,\bar\tau}^s}+
\sum_{{B\in \bs_{n,\bar\tau}(t_{n,\bar\tau}^\sigma)\setminus  \cs_{n,\bar\tau}(t_{n,\bar\tau}^\sigma)}}(J_{\bar\tau}u_n\res B)\ast\rho_{\delta_{n,\bar\tau}^s}\Bigg|(A) \to 0\,.
\end{eqnarray}
Moreover, by arguing as in \eqref{follia20} and by using the first inequality in \eqref{pelo}, it is easy to see that
\begin{equation}\label{follia2}
\begin{aligned}
&\Bigg| \sum_{\newatop{B\in \cs_{n,\bar\tau}(t_{n,\bar\tau}^\sigma)}{J_{\bar\tau}u_n(B)\neq 0}}  (J_{\bar\tau}u_n\res B)\ast\rho_{\e_n^s}\Bigg|= \sum_{\newatop{B\in \cs_{n,\bar\tau}(t_{n,\bar\tau}^\sigma)}{J_{\bar\tau}u_n(B)\neq 0}} \Big| (J_{\bar\tau}u_n\res B)\ast\rho_{\e_n^s}\Big|,\\
&\Bigg| \sum_{\newatop{B\in \cs_{n,\bar\tau}(t_{n,\bar\tau}^\sigma)}{J_{\bar\tau}u_n(B)\neq 0}}  (J_{\bar\tau}u_n\res B)\ast\rho_{\sr_{n,\bar\tau}^s}\Bigg|= \sum_{\newatop{B\in \cs_{n,\bar\tau}(t_{n,\bar\tau}^\sigma)}{J_{\bar\tau}u_n(B)\neq 0}} \Big| (J_{\bar\tau}u_n\res B)\ast\rho_{\sr_{n,\bar\tau}^s}\Big|\,,
\end{aligned}
\end{equation}
which, by arguing as in \eqref{noia30}, implies that for any $\f\in C_c(\Omega)$ with $\|\f\|_{L^\infty}\le 1$
\begin{equation} \label{noia3}
\Bigg|\langle \Big| \sum_{\newatop{B\in \cs_{n,\bar\tau}(t_{n,\bar\tau}^\sigma)}{J_{\bar\tau}u_n(B)\neq 0}} (J_{\bar\tau}u_n\res B)\ast\rho_{\e_n^s}\Big| -\Big| \sum_{\newatop{B\in \cs_{n,\bar\tau}(t_{n,\bar\tau}^\sigma)}{J_{\bar\tau}u_n(B)\neq 0}} (J_{\bar\tau}u_n\res B)\ast \rho_{\delta_{n,\bar\tau}^s}\Big|,\f\rangle\Bigg|
\to 0\qquad\textrm{as }n\to +\infty,
\end{equation}
Then \eqref{due} follows by \eqref{noia1}, \eqref{noia2} and \eqref{noia3}.

We end up with the proof of (i) by showing \eqref{tre} which, in view of \eqref{comeweakconv} and \eqref{noia2} is equivalent to
\begin{equation}\label{noiainfinita}
\Bigg| \sum_{\newatop{B\in \cs_{n,\bar\tau}(t_{n,\bar\tau}^\sigma)}{J_{\bar\tau}u_n(B)\neq 0}} (J_{\bar\tau}u_n\res B)\ast\rho_{\delta_{n,\bar\tau}^s}\Bigg| -\Bigg| \sum_{\newatop{B\in \cs_{n,\bar\tau}(t_{n,\bar\tau}^\sigma)}{J_{\bar\tau}u_n(B)\neq 0}} (\mu_{n,\bar\tau}(t_{n,\bar\tau}^\sigma)\res B)\Bigg|\weakstar 0\qquad\textrm{as }n\to +\infty\,.
\end{equation}
Let $\f\in C_c(\Omega)$ with $\|\f\|_{L^\infty}\le 1$. By \eqref{follia2} and by triangular inequality we have
\begin{multline*}
\Bigg|\langle \Big| \sum_{\newatop{B\in \cs_{n,\bar\tau}(t_{n,\bar\tau}^\sigma)}{J_{\bar\tau}u_n(B)\neq 0}} (J_{\bar\tau}u_n\res B)\ast\rho_{\delta_{n,\bar\tau}^s}\Big| -\Big| \sum_{\newatop{B\in \cs_{n,\bar\tau}(t_{n,\bar\tau}^\sigma)}{J_{\bar\tau}u_n(B)\neq 0}} (\mu_{n,\bar\tau}(t_{n,\bar\tau}^\sigma)\res B)\Big|,\f\rangle\Bigg|\\
\le
 \sum_{\newatop{B\in \cs_{n,\bar\tau}(t_{n,\bar\tau}^\sigma)}{J_{\bar\tau}u_n(B)\neq 0}}\Big| \langle\Big| (J_{\bar\tau}u_n\res B)\ast\rho_{\delta_{n,\bar\tau}^s}\Big| -\Big| (\mu_{n,\bar\tau}(t_{n,\bar\tau}^\sigma)\res B)\Big|,\f\rangle\Big|
\\
\le 
 \sum_{\newatop{B\in \cs_{n,\bar\tau}(t_{n,\bar\tau}^\sigma)}{J_{\bar\tau}u_n(B)\neq 0}}\Big| \langle\Big| (J_{\bar\tau}u_n\res B)\ast\rho_{\delta_{n,\bar\tau}^s}\Big| -\Big| (\mu_{n,\bar\tau}(t_{n,\bar\tau}^\sigma)\res B)\ast \rho_{\delta_{n,\bar\tau}^s}\Big|,\f\rangle\Big|\\
 +\sum_{\newatop{B\in \cs_{n,\bar\tau}(t_{n,\bar\tau}^\sigma)}{J_{\bar\tau}u_n(B)\neq 0}}\Big| \langle \Big| (\mu_{n,\bar\tau}(t_{n,\bar\tau}^\sigma)\res B)\ast \rho_{\delta_{n,\bar\tau}^s}\Big|-\Big| (\mu_{n,\bar\tau}(t_{n,\bar\tau}^\sigma)\res B)\Big| ,\f\rangle\Big|\to 0\qquad\textrm{ as }n\to +\infty\, ,
\end{multline*}
where the convergence to zero of the first addendum and of the second addendum follow by Lemma \ref{speriamo} (ii) and (iii) respectively.

This concludes the proof of \eqref{noiainfinita},   of \eqref{tre} and, in turn, of all the compactness properties in (i).

\vskip2mm
\noindent
\emph{\textbf{Step 3: Proof of  the ${\bf \Gamma}$-liminf inequality.}} 
We can assume without loss of generality that the upper bound \eqref{eneboundgl} holds true.
Then, by \eqref{Modica0}, Fatou Lemma, Remark \ref{disuene}, \eqref{pelo}, \eqref{boundf} and Theorem \ref{mains1} (iv)   we have
\begin{multline*}
\liminf_{n\to +\infty} \frac{GL_{\en}(u_n)}{|\log \en|}   
\ge 
\liminf_{n\to +\infty} \frac{1}{|\log \en|}  \int_{\frac 2l}^{1-\frac 2l}  2 \tau\Theta_n(\tau) \ud \tau 
\\ \ge
\int_{\frac 2l}^{1-\frac 2l} 2 \tau  \liminf_{n\to +\infty} \frac{\Theta_n(\tau)}{|\log \en|}     \ud \tau 
\ge
\int_{\frac 2l}^{1-\frac 2l} 2 \tau  \liminf_{n\to +\infty} \frac{F(\bs_{n,\tau}(1), \mu_{n,\tau}(1), \Om)}{|\log \sr_{n,\tau}|}     \ud \tau 
\\
\ge
\Big(1-\frac 4l\Big) \, \pi\sum_{k=1}^{\sharp S}(s_k-s_{k-1})\nu^{s_{k-1}}(\Omega),
\end{multline*}
whence the claim follows by sending $l\to\infty$.

\vskip2mm
\noindent
\emph{\textbf{Step 4: Proof of  the ${\bf \Gamma}$-limsup inequality.}} 
The construction of the recovery sequence closely resembles the one in the proof of Theorem \ref{mainscra}.
We briefly sketch it.

Let $\mu$, $\{\nu^s\}$ and $S$ be as in the assumption.
By standard density arguments we can assume that $K:=\sharp S<+\infty$ so that $S:=\{0=s_0<s_1<\ldots<s_K=1\}$. We set $\eta_\df^{s_0}:=\xi_{\df}^{s_0}$ and 
$\eta_\df^{s_k}:=\nu^{s_k}-\nu^{s_{k-1}}$ for any $k=1,\ldots,K$. 
 Again by density arguments we can assume that 
 $\mu=\sum_{i=1}^N z_i\delta_{x_i}$ with $|z_i|=1$ and $x_i\neq x_j$ for $i\neq j$, and that  $\eta_{\df}^{s_k}=2 \delta_{y^{s_k}}$ with 
all $y^{s_k}$ different from each other and from all $x_i$'s.
Let moreover ${y^{s_k}_{n,+}}$ and ${y^{s_k}_{n,-}}$ be two points in $\Omega$ such that  $\di({y^{s_k}_{n,+}},{y^{s_k}_{n,-}})=2\,\di({y^{s_k}_{n,+}},{y^{s_k}})=2\,\e_n^{s_k}$ for any $k=1,\ldots,K$, 
while for $k=0$ we enforce that  $\di({y^{s_0}_{n,+}},{y^{s_0}_{n,-}})=2\,\di({y^{s_0}_{n,+}},{y^{s_0}})=2\,\frac{1}{|\log \e_n|}$.
For any $n\in\N$, let $g_n(r):=\min\{\frac{r}{\en},1\}$ and let $\vartheta_n$ be defined as in \eqref{fasen}, for any $x\in\Omega$ we set 
$$
u_n(x):=\prod_{i=1}^N g_n(|\cdot-x_i|)\cdot\prod_{k=0}^K  g_n(|\cdot-y_{n,+}^{s_k}|)\cdot g_n(|\cdot-y_{n,-}^{s_k}|)\cdot e^{i\vartheta_n(\cdot)}.
$$
One can easily check that $u_n\in H^1(\Omega;\R^2)$ and that $Ju_n$ satisfy all the desired convergence properties. By arguing as in the proof of Theorem \ref{mainscra}, one can prove that
$$
\limsup_{n\to +\infty}\frac{GL_{\en}(u_n)}{|\log\en|}\le  \pi|\mu|(\Omega)+\pi\sum_{k=0}^{K}(1-s_k)\eta_{\df}^{s_k}(\Omega),
$$ 
which, in view of \eqref{nuovogammalim}, and sending eventually $K\to +\infty$, yields \eqref{limsupgl}. 
\end{proof}

\vskip4mm
\textbf{Acknowledgements.}  
The authors gratefully acknowledge Marco Cicalese for fruitful and interesting discussions.

\end{document}